\newcommand{\ZZ}{\mathbb{Z}}
\newcommand{\QQ}{\mathbb{Q}}
\newcommand{\RR}{\mathbb{R}}
\newcommand{\kk}{\Bbbk}
\newcommand{\xb}{\mathbf{x}}
\newcommand{\fkg}{\mathfrak{g}}
\newcommand{\Hom}{\operatorname{Hom}}
\def\opn#1#2{\def#1{\operatorname{#2}}} 
\opn\Cl{Cl} \opn\conv{conv} \opn\deg{deg} \opn\rank{rank} \opn\Spec{Spec} \opn\Stab{Stab}\opn\cone{cone} \opn\End{End} \opn\Hom{Hom} \opn\mod{mod} \opn\gldim{gldim} \opn\pdim{pdim}\opn\Block{Block} \opn\Pyr{Pyr}
\opn\gin{gin}\opn\inn{in} \opn\cok{coker} \opn\core{core}
\opn\pd{pd} \opn\Soc{Soc} \opn\Ap{Ap}
\opn\PF{PF} \opn\t{t} \opn\F{F} \opn\e{e}
\opn\m{m} \opn\G{G} \opn\g{g} \opn\H{H}
\opn \embdim{embdim} \opn\var{var}
\opn{\mult}{mult} \opn{\emb}{emb}
\opn{\gap}{Gap}
\newtheorem{thm}{Theorem}[section]
\newtheorem{cor}[thm]{Corollary}
\newtheorem{lem}[thm]{Lemma}
\newtheorem{prop}[thm]{Proposition}
\theoremstyle{definition}
\newtheorem{defi}[thm]{Definition}
\newtheorem{ex}[thm]{Example}
\theoremstyle{remark}
\newtheorem{rem}[thm]{Remark}
\begin{document}

\title{Conditions of multiplicity and applications \\for almost Gorenstein graded rings}
\author{Koji Matsushita and Sora Miyashita}

\address[K. Matsushita]{Department of Pure and Appl. Mathematics, Graduate School of Information Science and Technology, Osaka University, Suita, Osaka 565-0871, Japan}
\email{k-matsushita@ist.osaka-u.ac.jp}
\address[S. Miyashita]{Department of Pure And Appl. Mathematics, Graduate School Of Information Science And Technology, Osaka University, Suita, Osaka 565-0871, Japan}
\email{u804642k@ecs.osaka-u.ac.jp}

\subjclass[2020]{
Primary 13H10; Secondary 13M05} 
\keywords{almost Gorenstein, multiplicity, tensor products, edge rings,
stable set rings,
numerical semigroup rings}

\maketitle

\begin{abstract}
In this paper, 
we prove that if Cohen--Macaulay local/graded rings $A$, $B$ and $R$ satisfy certain conditions regarding multiplicity and Cohen--Macaulay type, then almost Gorenstein property of $R$ implies Gorenstein properties for all of $A$, $B$ and $R$.
We apply our theorem to tensor products of semi-standard graded rings
and some classes of affine semigroup rings,
i.e.,
numerical semigroup rings, edge rings and stable set rings.
\end{abstract}

\bigskip

\section{Introduction}
An almost Gorenstein ring, which we focus on in this paper, is one of
generalized notion of Gorenstein rings.
This notion was initially introduced by Barucci and Fr{\"o}berg \cite{barucci1997one} in the case where the local rings are of dimension one and analytically unramified.
Their work inspired Goto, Matsuoka and Phuong \cite{goto2013almost} to extend the notion of almost Gorenstein local rings for any one-dimensional Cohen--Macaulay rings.
Moreover, Goto, Takahashi and Taniguchi \cite{goto2015almost} defined the almost Gorenstein local rings for any dimension.
After that, the $h$-vectors of almost Gorenstein standard graded rings are studied by Higashitani in \cite{higashitani2016almost}.
The almost Gorenstein property of affine semigroup rings such as numerical semigroups \cite{herzog2019almost,nari2013symmetries}, edge rings \cite{bhaskara2023h,higashitani2022levelness, higashitani2023h} and stable set rings \cite{miyazaki2023non} has been studied.

Let $(R, \m)$ be a Cohen--Macaulay local ring.
Then $R$ is said to be an {\it almost Gorenstein local ring} if $R$ admits a canonical module $\omega_R$ and
\begin{align}\nonumber
\text{there exists an $R$-monomorphism $\phi: R \hookrightarrow \omega_R$}
\end{align}
such that $C:=\cok(\phi)$ is an Ulrich $R$-module, i.e., $\mu(C) = e(C)$ (\cite[Definition 3.3]{goto2015almost}). Here, $\mu(C)$ (resp. $e(C)$) denotes the number of elements in a minimal generating system of $C$ (resp. the multiplicity of $C$ with respect to $\m$).
The zero module is regarded as an Ulrich $R$-module.
Similarly to local rings, a Cohen--Macaulay graded ring $R=\bigoplus_{n \ge 0}R_n$ with $\kk=R_0$ a field is called an {\it almost Gorenstein graded ring} if $R$ admits a graded canonical module $\omega_R$ and
\begin{align}\nonumber
\text{there exists an $R$-monomorphism $\phi: R \hookrightarrow \omega_R(-{a_R})$ of degree 0}
\end{align}
such that $C:=\cok(\phi)$ is an Ulrich $R$-module
(\cite[Definition 8.1]{goto2015almost}).
Here,
$a_R=-\min \{i:(\omega_R)_i \neq 0\}$ is the $a$-invariant of $R$ and
$\omega_R(-{a_R})$ is the graded $R$-module defined as having the same underlying $R$-module structure as $\omega_R$ with the grading determined by $[\omega_R(-{a_R})]_n = [\omega_R]_{n-{a_R}}$ for all $n \in \Bbb Z$.
In this paper, we introduce the following.
\begin{defi}
Let $(R,\m)$ be a Cohen--Macaulay local ring that admits a canonical module
$\omega_R$.
Assume that there exists an $R$-monomorphism
$\phi : R \hookrightarrow \omega_R$
and
we define $\delta_{\phi}(R)$ as $e(\cok(\phi))$.
When there is no risk of confusion about the monomorphism we simply write $\delta_{\phi}(R)$ as $\delta(R)$ without explicitly mentioning $\phi$.
Moteover, we call $R$ is {\it almost Gorenstein with respect to $\phi$} if $\cok(\phi)$ is an Ulrich $R$-module.
We call {\it $R$ satisfies  $(*)$}
if there exists an $R$-monomorphism
$\phi : R \hookrightarrow \omega_R$
such that $\phi(1_R) \notin \m \omega_R$.
\end{defi}

If $R$ is non-regular,
then condition $(*)$ is a necessary condition for $R$ to be almost Gorenstein (see \cite[Corollary 3.10]{goto2015almost}).
We introduce the graded versions of above.

\begin{defi}
Let $R=\bigoplus_{i \geq 0}R_i$ be a Cohen--Macaulay graded ring.
We call {\it$R$ satisfies $(*)$}
if there exists an $R$-monomorphism $\phi : R \hookrightarrow \omega_R(-a_R)$ of degree 0.
Note that $\phi$ satisfies $\phi(1_R)
\notin \m \omega_R$
where $\m=\bigoplus_{n > 0}R_n$.
If $R$ satisfies $(*)$,
take an $R$-monomorphism $\phi : R \hookrightarrow \omega_R(-a_R)$ of degree 0 and
we define $\delta_{\phi}(R)$ as $e(\cok(\phi))$.
When there is no risk of confusion about the monomorphism
we simply write $\delta_{\phi}(R)$ as $\delta(R)$.
Moreover, we call $R$ is {\it almost Gorenstein with respect to $\phi$} if $\cok(\phi)$ is an Ulrich $R$-module.
\end{defi}

Now let us explain our main theorem.
Let $A$ and $B$ be positively graded $\kk$-algebras and let $R$ be a $\kk$-algebra constructed
from $A$ and $B$
in a certain way.
In this case, some ring-theoretic properties of $R$ may be inherited by $A$ and $B$.
For example, let $R = A \otimes_\kk B$ be the tensor product of $A$ and $B$, then it is known that $A$ and $B$ are Gorenstein if and only if $R$ is Gorenstein.
Actually, a similar assertion holds for the tensor product of almost Gorenstein semi-standard graded rings
and the ``gluing" of numerical semigroups defined by \cite{De, JCR},
which is a
distinct operation from tensor product.

In this paper, a more general statement is proved that extends the cases of the tensor product of semi-standard graded rings and the gluing of numerical semigroups.
The following are our main results:
Here, $r(R)$ denotes the Cohen--Macaulay type of $R$.
\begin{thm}[{see Theorem~\ref{MMMMMlocal}}]
Let $A$, $B$ and $R$ be Cohen--Macaulay local rings.
Assume that $A$ and $B$ satisfy $(*)$ 
and there exists an $R$-monomorphism $\phi:R\hookrightarrow \omega_R$.
Moreover, we assume $e(A) > 1$, $e(B)>1$ and
\begin{align}\label{INEQUL5}
\delta_\phi(R) \geq e(B)\delta({A})+e(A)\delta({B}).
\end{align}
Then the following conditions are equivalent:
\begin{itemize}
\item[(1)] $R$ is almost Gorenstein with respect to $\phi$ and $r(R) \leq r(A)r(B) $;
\item[(2)] $A$ and $B$ Gorenstein
and the equality of (\ref{INEQUL5}) holds.
\end{itemize}
If this is the case, $R$ is Gorenstein.
\end{thm}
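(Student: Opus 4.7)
The plan is to prove the two implications separately, with (1) $\Rightarrow$ (2) (together with the final Gorenstein conclusion) being the substantial direction.

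For (2) $\Rightarrow$ (1) and the final claim, I would argue as follows: if $A$ and $B$ are Gorenstein, then $\omega_A \cong A$ and $\omega_B \cong B$ are cyclic, so the monomorphisms $\phi_A, \phi_B$ supplied by $(*)$ (with images of $1$ outside $\m\omega_A$ and $\m\omega_B$) are forced to be isomorphisms. Thus $\delta(A) = \delta(B) = 0$, and the assumed equality in \eqref{INEQUL5} gives $\delta_\phi(R) = e(\cok(\phi)) = 0$, forcing $\cok(\phi) = 0$. Hence $\phi$ is itself an isomorphism and $R$ is Gorenstein, which establishes both (1) (with $r(R) = 1 = r(A) r(B)$) and the ``If this is the case'' assertion.

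For (1) $\Rightarrow$ (2), the first step is to extract the value of $\delta_\phi(R)$ from the almost Gorenstein structure. From the short exact sequence $0 \to R \to \omega_R \to C \to 0$ with $C := \cok(\phi)$ and $\phi(1) \notin \m \omega_R$ (a standard consequence of $R$ being almost Gorenstein with respect to $\phi$), one obtains $\mu(C) = r(R) - 1$; the Ulrich condition $\mu(C) = e(C)$ then gives $\delta_\phi(R) = r(R) - 1$. Combined with $r(R) \leq r(A) r(B)$ and \eqref{INEQUL5}, this produces
\[
e(B)\delta(A) + e(A)\delta(B) \leq r(A) r(B) - 1. \qquad (\star)
\]
The key engine is then two applications of the standard MCM inequality $\mu(M) \leq e(M)$: applied to $\omega_A$ and $\omega_B$ it yields $e(A) \geq r(A)$ and $e(B) \geq r(B)$; applied to $C_A := \cok(\phi_A)$ (which has depth $\dim A - 1$ by the depth lemma on the defining sequence, and so is a maximal Cohen--Macaulay module over $A/\operatorname{ann}(C_A)$) it gives $\delta(A) \geq \mu(C_A) = r(A) - 1$, and likewise $\delta(B) \geq r(B) - 1$. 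Substituting into $(\star)$ and then replacing the $e$'s by $r$'s yields $2 r(A) r(B) - r(A) - r(B) \leq r(A) r(B) - 1$, equivalently $(r(A) - 1)(r(B) - 1) \leq 0$; since both factors are non-negative, at least one of $A, B$ is Gorenstein. Assuming $B$ is Gorenstein, $(\star)$ collapses to $e(B) \delta(A) \leq r(A) - 1$, and combining with $\delta(A) \geq r(A) - 1$ and $e(B) \geq 2$ forces $r(A) = 1$, so $A$ is also Gorenstein. Then $\delta(A) = \delta(B) = 0$ and equality in \eqref{INEQUL5} is automatic.

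The main obstacle I anticipate is the clean verification of $\delta(A) \geq r(A) - 1$ under hypothesis $(*)$: one must confirm that $C_A$ is maximal Cohen--Macaulay over $A/\operatorname{ann}(C_A)$ so as to apply the standard $\mu \leq e$ inequality, and handle the Gorenstein case ($C_A = 0$) separately. The two hypotheses $e(A), e(B) > 1$ and $r(R) \leq r(A) r(B)$ play distinct roles: the former provides the slack $e(B) \geq 2$ needed to force the second ring to be Gorenstein after the first, while the latter converts the Ulrich identity $\delta_\phi(R) = r(R) - 1$ into the upper bound used in $(\star)$.
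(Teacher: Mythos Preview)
Your proof is correct, and the reduction to the inequality $(\star)$ is exactly the paper's line of attack (the paper likewise invokes $\mu(C)=r(R)-1$ via Proposition~\ref{gradedmuformula} and \cite[Corollary~3.10]{goto2015almost}, after disposing of the regular case separately; you should perhaps be explicit that the claim $\phi(1)\notin\m\omega_R$ is precisely what that corollary supplies in the non-regular case). Where you diverge is in how you extract Gorensteinness of $A$ and $B$ from $(\star)$. The paper isolates this step as a standalone lemma (Lemma~\ref{mysterious}): setting $\gamma_i=\delta(A_i)$, it rewrites $(\star)$ as the positivity of $(e(A)-1-\gamma_1)(e(B)-1-\gamma_2)-(e(A)-1)(e(B)-1)$, rules out the case where both factors of the first product are negative using the \emph{strict} inequality $r<e$ of Proposition~\ref{prop_type<e}, and then observes the expression is negative unless $\gamma_1=\gamma_2=0$. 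Your route is more direct and more elementary: substituting $\delta\ge r-1$ and the weak bound $e\ge r$ into $(\star)$ gives $(r(A)-1)(r(B)-1)\le 0$ in one stroke, after which a second pass using only $e(B)\ge 2$ finishes. Your argument never needs the strict inequality $r<e$, so Proposition~\ref{prop_type<e} becomes unnecessary; on the other hand, the paper's formulation of Lemma~\ref{mysterious} is slightly more general (arbitrary reals $\gamma_i\ge 0$ with $r\le\gamma_i+1$), though that extra generality is not used elsewhere.
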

If we consider graded case, we have the following.
\begin{thm}[{see Theorem~\ref{MMMMM}}]
Let $A$, $B$ and $R$ be Cohen--Macaulay positively graded rings.
Assume that $A$, $B$ and $R$ satisfy $(*)$, $e(A) > 1$, $e(B)>1$ and
\begin{align}\label{INEQUL}
\delta_\phi(R) \geq e(B)\delta({A})+e(A)\delta({B}).
\end{align}
Then the following conditions are equivalent:
\begin{itemize}
\item[(1)] $R$ is almost Gorenstein with respect to $\phi$ and $r(R) \leq r(A)r(B) $;
\item[(2)] $R$ is Gorenstein;
\item[(3)] $A$ and $B$ Gorenstein
and the equality of (\ref{INEQUL}) holds.
\end{itemize}
\end{thm}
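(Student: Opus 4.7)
The plan is to prove the cycle (2) $\Rightarrow$ (1), (2) $\Rightarrow$ (3), (3) $\Rightarrow$ (2), and (1) $\Rightarrow$ (2), with the last being the substantive step. The whole argument hinges on a structural observation: for any Cohen--Macaulay positively graded $S$ satisfying $(*)$ via $\psi : S \hookrightarrow \omega_S(-a_S)$, the cokernel $C_\psi := \cok(\psi)$ is either zero or Cohen--Macaulay of dimension $\dim S - 1$. Indeed, Matlis duality at a minimal prime $\fkp$ shows $(\omega_S)_\fkp$ and $S_\fkp$ have the same finite length, so $\psi_\fkp$ is an isomorphism and $\dim C_\psi \leq \dim S - 1$, while the depth lemma on $0 \to S \to \omega_S(-a_S) \to C_\psi \to 0$ yields $\operatorname{depth} C_\psi \geq \dim S - 1$. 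This yields two facts: (i) $e(C_\psi) = 0$ iff $C_\psi = 0$ iff $S$ is Gorenstein; and (ii) tensoring with $\kk$ and using $\psi(1) \notin \m\omega_S$ gives $\mu(C_\psi) = r(S) - 1$, hence $\delta(S) \geq r(S) - 1$ by the standard estimate $\mu(M) \leq e(M)$ for Cohen--Macaulay $M$ (via a minimal reduction of $\m$), with equality iff $S$ is almost Gorenstein with respect to $\psi$.

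The three short implications are immediate. If $R$ is Gorenstein, then $\omega_R(-a_R) \cong R$, so the degree-$0$ monomorphism $\phi$ lies in $\Hom_R(R,R)_0 = R_0 = \kk$ and is therefore a nonzero scalar, hence an isomorphism. This forces $\delta_\phi(R) = 0$ and $r(R) = 1 \leq r(A)r(B)$, proving (1); moreover $0 = \delta_\phi(R) \geq e(B)\delta(A) + e(A)\delta(B) \geq 0$ combined with $e(A), e(B) > 0$ forces $\delta(A) = \delta(B) = 0$, so (i) applied to $A$ and $B$ makes them Gorenstein with equality in (\ref{INEQUL}), proving (3). Conversely, assuming (3), the Gorensteinness of $A, B$ together with the stated equality gives $\delta_\phi(R) = 0$, so (i) forces $R$ Gorenstein, i.e., (2).

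For the substantive implication (1) $\Rightarrow$ (2), (ii) combined with the almost-Gorenstein hypothesis gives $\delta_\phi(R) = r(R) - 1$, while (ii) applied to $A$ and $B$ gives $\delta(A) \geq r(A) - 1$ and $\delta(B) \geq r(B) - 1$. Together with $r(R) \leq r(A)r(B)$ and the main inequality, this produces
\[
r(A)r(B) - 1 \geq r(R) - 1 = \delta_\phi(R) \geq e(B)\delta(A) + e(A)\delta(B) \geq e(B)(r(A) - 1) + e(A)(r(B) - 1),
\]
which rearranges to $(e(A) - r(A))(e(B) - r(B)) \geq (e(A) - 1)(e(B) - 1)$. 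Since $\omega_A$ is maximal Cohen--Macaulay with $\mu(\omega_A) = r(A)$ and $e(\omega_A) = e(A)$, we have $1 \leq r(A) \leq e(A)$ (and similarly for $B$), placing each factor on the left-hand side in $[0, e(A) - 1]$ (resp.\ $[0, e(B) - 1]$). Because $(e(A) - 1)(e(B) - 1) > 0$, equality in the product bound is forced, yielding $r(A) = r(B) = 1$; thus $A$ and $B$ are Gorenstein, the chain collapses to $r(R) = 1$, and $R$ is Gorenstein.

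The main obstacle is the preliminary structural fact on $C_\psi$: without knowing that it is Cohen--Macaulay of codimension one whenever nonzero, neither the equivalence $e(C_\psi)=0 \Leftrightarrow S$ Gorenstein, nor the estimate $\mu(C_\psi) \leq e(C_\psi)$, could be applied uniformly to $A$, $B$, and $R$. Once this lemma is in hand, the remainder is a careful rearrangement that exploits the integer bounds $1 \leq r(\cdot) \leq e(\cdot)$ to convert the multiplicity inequality into a rigid constraint on Cohen--Macaulay types.
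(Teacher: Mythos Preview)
Your proof is correct and follows the same skeleton as the paper: reduce everything to the chain
\[
r(A)r(B)-1 \;\ge\; r(R)-1 \;=\; \delta_\phi(R) \;\ge\; e(B)\delta(A)+e(A)\delta(B),
\]
using $\mu(C_\psi)=r(S)-1$ (the paper's Proposition~\ref{gradedmuformula}) and the Cohen--Macaulayness of $C_\psi$ in codimension one (the paper cites \cite[Lemma~3.1]{goto2015almost}).

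The genuine difference is in how you extract $r(A)=r(B)=1$ from this chain. The paper isolates this step as Lemma~\ref{mysterious}, setting $\gamma_i=\delta(A_i)$, expanding to $(e(A)-1-\gamma_1)(e(B)-1-\gamma_2)\ge (e(A)-1)(e(B)-1)$, and then needing the \emph{strict} inequality $r(S)<e(S)$ of Proposition~\ref{prop_type<e} to exclude the case where both factors are negative. You instead first replace $\delta(A),\delta(B)$ by the smaller quantities $r(A)-1,r(B)-1$ and rearrange to $(e(A)-r(A))(e(B)-r(B))\ge (e(A)-1)(e(B)-1)$; since $1\le r(\cdot)\le e(\cdot)$ places each left-hand factor in $[0,e(\cdot)-1]$, the sign ambiguity never arises and only the weak bound $r\le e$ is needed. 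This is a tidier route: it bypasses the case analysis of Lemma~\ref{mysterious} entirely, at the cost of not concluding $\delta(A)=\delta(B)=0$ directly (you recover that afterward from $r(A)=r(B)=1$). Your direct argument for $(2)\Rightarrow(3)$ via $0=\delta_\phi(R)\ge e(B)\delta(A)+e(A)\delta(B)\ge 0$ is also slightly more economical than the paper's route $(2)\Rightarrow(1)\Rightarrow(3)$.
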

When does the inequality (\ref{INEQUL}) hold?
Actually, the tensor product of semi-standard graded rings or the gluing of numerical semigroups satisfies the inequality (\ref{INEQUL}).
So we get the following results about the tensor products (or the quotient rings divided by
its regular sequences)
and the gluing of numerical semigroups. The second one is a well-known result (see \cite[Theorem 6.7]{nari2013symmetries}
and \cite{De}).

\begin{cor}[{see Corollary~\ref{cor_tensor}}]
Let $A$ and $B$ be semi-standard graded Cohen--Macaulay $\kk$-algebras over a field $\kk$
such that $A$ and $B$ satisfy $(*)$ and $e(A) > 1$, $e(B)>1$.
In addition, let $T=A\otimes_\kk B$ and
let $\xb=x_1,\ldots,x_n$
be a homogeneous regular sequence on $T$
 with $\deg(x_k)=a_k$ for $1 \le k \le n$
such that $T/(\xb)$
satisfies $(*)$.
Set $R=T$ or $R=T/(\xb)$,
then the following conditions are equivalent:
\begin{itemize}
\item[(1)] $R$ is almost Gorenstein;
\item[(2)] $R$ is Gorenstein;
\item[(3)] $A$ and $B$ are Gorenstein.
\end{itemize}
\end{cor}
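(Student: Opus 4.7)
The plan is to invoke the graded version of the main theorem, Theorem~\ref{MMMMM}, applied to the triple $R$, $A$, $B$, and verify its hypotheses. Both $T=A\otimes_\kk B$ and $T/(\xb)$ are Cohen--Macaulay and positively graded, and by hypothesis $e(A),e(B)>1$. Since $\omega_T=\omega_A\otimes_\kk\omega_B$, the Cohen--Macaulay type satisfies $r(T)=r(A)r(B)$, and since type is preserved under quotients by regular sequences, $r(R)=r(A)r(B)$ in both cases $R=T$ and $R=T/(\xb)$. Thus the extra condition $r(R)\le r(A)r(B)$ in part (1) of Theorem~\ref{MMMMM} is automatic. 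What remains is to verify the condition $(*)$ for $R$ and the multiplicity inequality (\ref{INEQUL}).

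For $R=T$, pick $\phi_A\colon A\hookrightarrow\omega_A(-a_A)$ and $\phi_B\colon B\hookrightarrow\omega_B(-a_B)$ furnished by $(*)$, and set $\phi=\phi_A\otimes\phi_B\colon T\to\omega_T(-a_T)$. Exactness of $-\otimes_\kk-$ shows $\phi$ is an injective degree-$0$ morphism, and $\phi(1_T)\notin\m\omega_T$ since the analogous property holds in each factor, so $T$ satisfies $(*)$. Factoring $\phi$ as $T\xrightarrow{\phi_A\otimes\mathrm{id}}\omega_A(-a_A)\otimes_\kk B\xrightarrow{\mathrm{id}\otimes\phi_B}\omega_T(-a_T)$ produces the short exact sequence of $T$-modules
\begin{equation}\nonumber
0\to C_A\otimes_\kk B\to\cok(\phi)\to\omega_A(-a_A)\otimes_\kk C_B\to 0,
\end{equation}
where $C_A=\cok(\phi_A)$ and $C_B=\cok(\phi_B)$. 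Multiplicativity of multiplicity under $\kk$-tensor products (yielding $e(C_A\otimes_\kk B)=\delta(A)e(B)$ and $e(\omega_A(-a_A)\otimes_\kk C_B)=e(A)\delta(B)$, using $e(\omega_A)=e(A)$) combined with additivity of multiplicity on short exact sequences gives $\delta(T)=e(B)\delta(A)+e(A)\delta(B)$, so (\ref{INEQUL}) holds with equality.

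For $R=T/(\xb)$ the condition $(*)$ is part of the hypothesis. Since $\xb$ is regular on $T$ and on $\omega_T$ (the latter being maximal Cohen--Macaulay), one has $\omega_{T/(\xb)}(-a_{T/(\xb)})\cong\omega_T(-a_T)/\xb\omega_T(-a_T)$; taking Hilbert series yields $H(\cok(\phi);t)=H(\cok(\phi_T);t)\prod_{k=1}^{n}(1-t^{a_k})$, so $\delta(T/(\xb))=\delta(T)\prod_{k=1}^{n}a_k\ge\delta(T)$, and (\ref{INEQUL}) again holds. With these hypotheses confirmed, Theorem~\ref{MMMMM} delivers the equivalence of the corollary's three conditions: its (1) matches the theorem's (1) because the type inequality is automatic and $\delta$ depends only on $R$ (via Hilbert series); (2) is literal; and (3) corresponds to the theorem's (3) because when $A$ and $B$ are Gorenstein we have $\delta(A)=\delta(B)=0$, making the equality clause in (\ref{INEQUL}) vacuous. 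The main delicate step to execute carefully is the identity $\delta(T)=e(B)\delta(A)+e(A)\delta(B)$, since it requires checking that all three modules in the displayed short exact sequence have the same Krull dimension, so that additivity of multiplicity genuinely applies.
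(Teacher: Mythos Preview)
Your reduction to Theorem~\ref{MMMMM} and the verification of $r(R)=r(A)r(B)$ match the paper's proof exactly. The genuine difference lies in how you establish the key identity $\delta(T)=e(B)\delta(A)+e(A)\delta(B)$.

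The paper proceeds purely via $h$-polynomials: since $h_T(t)=h_A(t)h_B(t)$, Corollary~\ref{h_PRODUCTcondition} (a consequence of the combinatorial identity $\delta(hg)=e(h)\delta(g)+e(g)\delta(h)$ of Proposition~\ref{ecformula}) immediately gives the equality. For $R=T/(\xb)$ the paper again uses Proposition~\ref{ecformula}, writing $h_R(t)=h_T(t)\prod_k(1+t+\cdots+t^{a_k-1})$ and reading off $\delta(R)=(\prod_k a_k)\delta(T)$.

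Your route is module-theoretic: you build $\phi=\phi_A\otimes\phi_B$, factor it through $\omega_A(-a_A)\otimes_\kk B$, and obtain the short exact sequence $0\to C_A\otimes_\kk B\to\cok(\phi)\to\omega_A(-a_A)\otimes_\kk C_B\to 0$, then invoke multiplicativity of multiplicity under $\otimes_\kk$ and additivity on exact sequences. This is correct; the dimension check you flag works out because each nonzero term has dimension $\dim T-1$, and the degenerate cases ($C_A=0$ or $C_B=0$) are harmless. Your argument has the virtue of bypassing the polynomial lemma entirely and of making transparent why the formula holds, while the paper's approach is shorter and makes the $T/(\xb)$ case a one-line consequence of the same combinatorial identity. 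Both are valid and arrive at the same inequality needed for Theorem~\ref{MMMMM}.
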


\begin{cor}[{see Corollary~\ref{NUMEGLUINGOYO}}]
Let $H_1$ and $H_2$ be numerical semigroups
and let $H$ be the gluing of $H_1$ and $H_2$.
Then the following conditions are equivalent:
\begin{itemize}
\item[(1)] $\kk[H]$ is almost Gorenstein;
\item[(2)] $\kk[H]$ is Gorenstein;
\item[(3)] $\kk[H_1]$ and $\kk[H_2]$ are Gorenstein.
\end{itemize}
\end{cor}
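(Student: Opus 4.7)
The strategy is to reduce to Theorem~\ref{MMMMM} applied to $A := \kk[H_1]$, $B := \kk[H_2]$, $R := \kk[H]$. These are one-dimensional positively graded Cohen--Macaulay $\kk$-domains whose canonical modules admit an explicit combinatorial description in terms of the pseudo-Frobenius elements of the underlying numerical semigroup. From this description one can produce, for each of $A$, $B$, and $R$, a degree-$0$ monomorphism into $\omega(-a)$, so that condition $(*)$ is satisfied throughout. The degenerate cases $H_1 = \NN$ or $H_2 = \NN$ (where $e(A) = 1$ or $e(B) = 1$) fall outside the scope of Theorem~\ref{MMMMM}; these are handled directly, since in such cases one of the factor rings is regular and the gluing collapses, so the three conditions are trivially equivalent. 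We may therefore assume $e(A), e(B) > 1$.

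The substantive step is to verify the multiplicity inequality~(\ref{INEQUL}). Writing the gluing as $H = pH_1 + qH_2$ with $p \in H_2 \setminus \{0\}$, $q \in H_1 \setminus \{0\}$ and $\gcd(p,q) = 1$, one invokes the classical invariants of the gluing, in particular
\begin{align*}
e(R) \;=\; \min\{p\,e(A),\; q\,e(B)\}, \qquad r(R) \;=\; r(A)\,r(B), \qquad F(H) \;=\; p\,F(H_1) + q\,F(H_2) + pq,
\end{align*}
together with the induced description of $\omega_R$ built from $\omega_A$ and $\omega_B$ via the gluing structure on pseudo-Frobenius elements. Using this description I would build $\phi : R \hookrightarrow \omega_R(-a_R)$ out of the chosen monomorphisms for $A$ and $B$, and compute the multiplicity of $\cok(\phi)$ directly; the expected outcome is the equality
\begin{align*}
\delta(R) \;=\; e(B)\,\delta(A) + e(A)\,\delta(B),
\end{align*}
which in particular yields~(\ref{INEQUL}). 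The bound $r(R) \le r(A)r(B)$ is then automatic from the gluing formula for Cohen--Macaulay type (indeed with equality).

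With these inputs in hand, Theorem~\ref{MMMMM} applies verbatim and delivers the equivalence of (1), (2), and (3). The principal obstacle I anticipate is the multiplicity computation in the second paragraph: one needs to choose the ``glued'' monomorphism $\phi$ compatibly with the Delorme presentation of $H$ and then extract $\delta(R)$ in terms of $\delta(A)$, $\delta(B)$, and the parameters $p, q$. Once this bookkeeping is in place, the remainder of the argument is a formal reduction to the main theorem.
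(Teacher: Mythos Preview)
Your overall strategy---verify $(*)$ for the three rings, establish the multiplicity inequality~(\ref{INEQUL}), invoke the type formula $r(R)=r(A)r(B)$ from the gluing, and then apply Theorem~\ref{MMMMM}---is exactly the route the paper takes. The gap is in your ``expected outcome'' for the $\delta$-computation: the equality $\delta(R)=e(B)\,\delta(A)+e(A)\,\delta(B)$ is \emph{false} in general. What the paper actually proves (Proposition~\ref{ORENOFORMULA}, via a Hilbert-series calculation using the Ap\'ery-set description of $\Ap(H,pq)$ from Lemma~\ref{Nari1}) is
\[
\delta(R)\;=\;p\,\delta(A)+q\,\delta(B),
\]
where $p,q$ are the gluing parameters, not the multiplicities. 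The inequality~(\ref{INEQUL}) then follows because the definition of gluing requires $p\in H_2\setminus G(H_2)$ and $q\in H_1\setminus G(H_1)$ (not merely $p\in H_2\setminus\{0\}$ as you wrote): a nonzero element of $H_i$ that is not a minimal generator is a sum of at least two generators, hence strictly exceeds $\mult(H_i)=e(\kk[H_i])$. Thus $p>e(B)$ and $q>e(A)$, giving $\delta(R)\ge e(B)\,\delta(A)+e(A)\,\delta(B)$ with equality only when $\delta(A)=\delta(B)=0$.

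So your plan would succeed once you (i) correct the target of the $\delta$-computation to involve $p,q$ rather than $e(A),e(B)$, and (ii) recover the inequality from the non-minimal-generator hypothesis on $p$ and $q$. Without that correction the argument stalls, since the equality you wrote down would force the conclusion of Theorem~\ref{MMMMM} to be that $A$ and $B$ are Gorenstein \emph{and} the equality in~(\ref{INEQUL}) holds---but that equality is precisely what you were assuming, so nothing would be gained.
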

\noindent
Here, $\kk[H]$ denotes the numerical semigroup ring of $H$.
Please refer to subsection \ref{NUMENUME} for the definition of the gluing of numerical semigroups.

We also provide applications of our results to two toric rings arising from graphs; edge rings and stable set rings.

\begin{thm}[{see Theorem~\ref{thm_ed}}]\label{main_edge}
Let $G_1$ and $G_2$ be simple graphs.
Suppose that $G_1$ is bipartite and $\kk[G_2]$ is Cohen--Macaulay with $e(\kk[G_i])>1$ for $i=1,2$.
Then the following are equivalent: 
\begin{itemize}
\item[(1)] $\kk[G_1\sharp G_2]$ is almost Gorenstein;
\item[(2)] $\kk[G_1\sharp G_2]$ is Gorenstein;
\item[(3)] $\kk[G_1]$ and $\kk[G_2]$ are Gorenstein.
\end{itemize}
\end{thm}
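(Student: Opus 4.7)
The plan is to deduce Theorem~\ref{main_edge} from Corollary~\ref{cor_tensor} by realizing $\kk[G_1\sharp G_2]$ as a quotient of the tensor product $T=\kk[G_1]\otimes_\kk\kk[G_2]$ by a homogeneous regular sequence $\xb$. Concretely, the target is an isomorphism of semi-standard graded $\kk$-algebras
\[
\kk[G_1\sharp G_2]\;\cong\;T/(\xb),
\]
where $\xb$ is the list of binomials/linear forms enforcing the identification of vertices (or edges) built into the gluing operation $\sharp$. Writing $\xb$ down explicitly from the combinatorial definition of $\sharp$, and checking regularity (using that $T$ is Cohen--Macaulay, since Cohen--Macaulayness is preserved under tensor products over a field), is the first task.

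The second task is to verify the remaining hypotheses of Corollary~\ref{cor_tensor} with $A=\kk[G_1]$, $B=\kk[G_2]$ and $R=T/(\xb)$. The conditions $e(A),e(B)>1$ are given. Cohen--Macaulayness of $A$ follows from bipartiteness of $G_1$ (edge rings of bipartite graphs are normal by Ohsugi--Hibi, hence Cohen--Macaulay by Hochster), while Cohen--Macaulayness of $B$ is assumed. Condition $(*)$ for $A$ and $B$ can be extracted from the Danilov--Stanley presentation of the canonical module as a module of relative interior lattice points of the edge polytope; for bipartite graphs this description is particularly transparent and yields a canonical-module generator of degree $a_A$ outside $\m\omega_A$. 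Condition $(*)$ for the quotient $R$ is obtained by transporting such generators on each side through the identifications encoded by $\xb$.

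Once the structural identification and the hypotheses are in place, Corollary~\ref{cor_tensor} applies directly and yields the equivalence of (1), (2) and (3) in Theorem~\ref{main_edge}. Note that one does not need to verify the multiplicity inequality \eqref{INEQUL} separately, since it is already packaged inside Corollary~\ref{cor_tensor} for the tensor-product and regular-sequence-quotient setting.

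The principal obstacle is the first task: pinning down the regular sequence realizing $\kk[G_1\sharp G_2]$ as a tensor-product quotient, and simultaneously verifying $(*)$ for that quotient. Bipartiteness of $G_1$ plays a double role here---it forces normality of $A$ (hence Cohen--Macaulayness) and pins down the combinatorial shape of $\omega_A$, and through the gluing it propagates enough structure to $R$ for $(*)$ to hold. Everything else is essentially bookkeeping once Corollary~\ref{cor_tensor} is invoked.
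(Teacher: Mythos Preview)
Your overall plan---realize $\kk[G_1\sharp G_2]$ as $T/(\xb)$ with $T=\kk[G_1]\otimes_\kk\kk[G_2]$ and then invoke Corollary~\ref{cor_tensor}---matches the paper's strategy, and your remark that bipartiteness gives normality (hence Cohen--Macaulayness) of $\kk[G_1]$ is correct. However, you have the difficulties calibrated backwards.

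Condition $(*)$ is a non-issue here. Edge rings are integral domains; for a Cohen--Macaulay positively graded domain $R$ the canonical module $\omega_R$ is maximal Cohen--Macaulay, hence its associated primes are contained in the minimal primes of $R$, so $\omega_R$ is torsion-free. Multiplication by any nonzero element of $(\omega_R)_{-a_R}$ therefore yields a degree-zero monomorphism $R\hookrightarrow\omega_R(-a_R)$. Thus $(*)$ holds automatically for $A$, $B$, and $R$, and nothing needs to be extracted from Danilov--Stanley.

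The genuine content lies in the isomorphism $\kk[G_1\sharp G_2]\cong T/(\xb)$, but not where you locate it. Since $G_1$ is bipartite, the shared clique has at most two vertices. If $|V(G_1)\cap V(G_2)|\le 1$ there is no shared edge and $\kk[G_1\sharp G_2]\cong T$ already. If $|V(G_1)\cap V(G_2)|=2$, the shared clique is a single edge $c$, the candidate sequence is the single linear form $z_1-z_2$ identifying the two copies of $x_c$, and regularity is immediate because $T$ is a domain. The nontrivial step is that $T/(z_1-z_2)$ is presented by $I_{G_1}+I_{G_2}$, so for it to coincide with $\kk[G_1\sharp G_2]$ one needs the toric splitting
\[
I_{G_1\sharp G_2}=I_{G_1}+I_{G_2},
\]
i.e., that no \emph{new} binomial relations arise from closed even walks crossing between $G_1$ and $G_2$. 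This fails for general clique sums and is precisely the second role of bipartiteness (not the shape of $\omega_A$); the paper imports it from \cite[Corollary~4.8]{favacchio2021splittings} and packages the whole reduction as Proposition~\ref{prop_split} and Corollary~\ref{cor_toric_split}. Your outline never isolates this splitting statement, and without it the structural identification---which you describe as ``bookkeeping''---is unjustified.
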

\noindent Here, $\kk[G]$ denotes the edge ring of a graph $G$ and $G_1\sharp G_2$ denotes the clique sum of $G_1$ and $G_2$.

\begin{thm}[{see Theorem~\ref{thm_st}}]\label{main_stab}
Let $G_1$ and $G_2$ be simple graphs.
Suppose that $\Stab_\kk(G_i)$ is Cohen--Macaulay with $e(\Stab_\kk(G_i))>1$ for $i=1,2$.
Then the following are equivalent:
\begin{itemize}
\item[(1)] $\Stab_\kk(G_1 + G_2)$ is almost Gorenstein;
\item[(2)] $\Stab_\kk(G_1 + G_2)$ is Gorenstein;
\item[(3)] $\Stab_\kk(G_1)$ and $\Stab_\kk(G_2)$ are Gorenstein.
\end{itemize}
\end{thm}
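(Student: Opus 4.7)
The plan is to realize $R := \Stab_\kk(G_1+G_2)$ as a quotient of $T := A \otimes_\kk B$, where $A := \Stab_\kk(G_1)$ and $B := \Stab_\kk(G_2)$, by a single linear regular element, and then invoke Corollary~\ref{cor_tensor}. The starting combinatorial observation is that every vertex of $G_1$ is adjacent in $G_1+G_2$ to every vertex of $G_2$; hence any stable set of $G_1+G_2$ lies entirely in $V(G_1)$ or entirely in $V(G_2)$, so $\Stab(G_1+G_2) = \Stab(G_1) \cup \Stab(G_2)$, glued along $\emptyset$. Consequently, the assignment sending the two empty-set generators $t_A \in A_1$ and $t_B \in B_1$ both to the empty-set generator $t \in R_1$ (and each non-empty stable-set generator to its counterpart in $R$) defines a surjective graded $\kk$-algebra homomorphism $\Phi : T \twoheadrightarrow R$ with $t_A - t_B \in \ker \Phi$. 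Since $T$ is a toric $\kk$-algebra and hence a graded domain, $t_A - t_B$ is a nonzero linear form and therefore a regular element of $T$.

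To identify $\ker \Phi$ with the ideal $(t_A - t_B)$, I would compare Hilbert series. From regularity one has $H(T/(t_A-t_B);z) = (1-z)\,H(A;z)\,H(B;z)$, while a direct count of the distinct monomials $\chi^{\alpha+\beta}t^n$ spanning $R_n$---sorted by the decomposition of $\alpha$ over $G_1$'s stable sets and $\beta$ over $G_2$'s---yields the same expression. The induced surjection $T/(t_A-t_B) \twoheadrightarrow R$ is then a graded map whose source and target have equal dimension in every degree, hence it is an isomorphism, giving $R \cong T/(t_A-t_B)$.

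With this presentation in hand, the hypotheses of Corollary~\ref{cor_tensor} are straightforward to verify. The rings $A$ and $B$ are standard (in particular, semi-standard) graded Cohen--Macaulay $\kk$-algebras with $e(A), e(B) > 1$ by assumption. Each of $A$, $B$, and $R$ is a graded Cohen--Macaulay toric domain, so its canonical module is torsion-free of rank one; any nonzero element in the bottom degree thereby yields a degree-zero monomorphism from the ring into the shifted canonical module, so $A$, $B$, and $R$ all satisfy $(*)$. Finally $\xb := (t_A - t_B)$ is a homogeneous regular sequence of length one on $T$. Applying Corollary~\ref{cor_tensor} then yields the desired equivalence $(1) \Leftrightarrow (2) \Leftrightarrow (3)$. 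The main obstacle is the presentation $R \cong T/(t_A - t_B)$ established in the second paragraph: the surjection is immediate, but certifying that no further toric relations arise from the identification $t_A = t_B$---via the Hilbert series comparison above---is the only step where the combinatorics of the join $G_1+G_2$ is used essentially. Once that is in hand, the theorem reduces to the tensor-product corollary by entirely formal verifications.
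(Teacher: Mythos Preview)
Your proposal is correct, and its overall architecture coincides with the paper's: both arguments present $R=\Stab_\kk(G_1+G_2)$ as $(A\otimes_\kk B)/(t_A-t_B)$ and then invoke Corollary~\ref{cor_tensor}. The difference lies in how that presentation is certified. The paper proves a binomial-level statement (Lemma~\ref{lem_stab_spli}): it shows that $J_{G_1+G_2}=J_{G_1}+J_{G_2}$ is a toric splitting sharing only the variable $x_\emptyset$, by explicitly decomposing an arbitrary binomial of $J_{G_1+G_2}$, and then feeds this into the general Proposition~\ref{prop_split}. You instead bypass the ideal-theoretic splitting and argue at the semigroup level via a Hilbert-series comparison. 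Your route is more self-contained for this particular ring and avoids the case analysis on binomials; the paper's route has the advantage of isolating a reusable lemma (the toric splitting of $J_{G_1+G_2}$) and plugging into machinery already set up for the edge-ring case. One caution: your Hilbert-series identity $H(R,z)=(1-z)H(A,z)H(B,z)$ is true, but the ``direct count'' you describe needs a little care, since a monomial in $R_n$ does not have a \emph{unique} decomposition over the two factors (the empty-set generator can be assigned to either side). The clean way to make your count rigorous is to record, for each pair $(\alpha,\beta)$, the minimal heights $p_{\min}(\alpha)$ in $\mathcal S_1$ and $q_{\min}(\beta)$ in $\mathcal S_2$, and observe that both sides equal $\sum_{(\alpha,\beta)} z^{p_{\min}(\alpha)+q_{\min}(\beta)}/(1-z)$.
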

\noindent Here, $\Stab_\kk(G)$ denotes the stable set ring of a graph $G$ and $G_1+G_2$ denotes the join of $G_1$ and $G_2$.

The structure of this paper is as follows.
In Section 2, we prepare some facts and definitions for the discussions later; we collect basic definitions and properties of commutative rings, numerical semigroups and toric rings arising from graphs.
In Section 3, we show our main theorem.
We also discuss conditions appearing in our main theorem.
Moreover, we apply our theorem to tensor products
of semi-standard graded rings.
In Section 4, we apply our main theorem to some classes of affine semigroup rings,
i.e.,
numerical semigroup rings, edge rings and stable set rings.

\subsection*{Acknowledgement}
We are grateful to Professor Akihiro Higashitani for his
comments and instructive discussions.
The first author is partially supported by
Grant-in-Aid for JSPS Fellows Grant JP22J20033.
\section{Preliminaries}
The goal of this section is to prepare the required materials for the discussions of our main results.
\subsection{Preliminaries on commutative rings}

In this subsection, we recall some fundamental materials (consult, e.g., \cite{bruns1998cohen} for the introduction to Cohen--Macaulay rings). 
\begin{itemize}
\item Let $\omega_R$ denote a canonical module of $R$. Let $a_R$ denote the $a$-invariant of $R$, i.e., $a_R=-\min\{n:(\omega_R)_n \neq 0\}$. 
\item For an $R$-module $M$, we use the following notation:
\begin{itemize}
\item Let $\mu(M)$ denote the number of a minimal generating system of $M$ as an $R$-module.
\item Let $e(M)$ denote the multiplicity of $M$. \item Note that the inequality $\mu(M) \leq e(M)$ always holds.
\end{itemize}
\item Let $r(R)$ denote the Cohen--Macaulay type of $R$. Note that $r(R)=\mu(\omega_R)$.
We see that Cohen--Macaulay ring $R$ is Gorenstein if and only if $r(R)=1$.
\item For a graded $R$-module $M$, we denote $\H(M,t)$ the Hilbert series of $M$, i.e., 
$$\H(M,t)=\sum_{n \in \ZZ}\dim_\kk M_n t^n,$$ where $\dim_\kk$ stands for the dimension as a $\kk$-vector space.
\item
Let $R=\bigoplus_{i\geq 0}R_i$ be a graded ring.
$R$ is called {\it semi-standard graded} if $R$ is finitely generated $\kk[R_1]$-module.
Moreover, $R$ is called {\it standard graded} if $R=\kk[R_1]$.
If $R$ is a semi-standard graded ring, its Hilbert series is of the form
$$\H(R,t) =\frac{h_0+h_1 t + \cdots +h_st^s}{(1-t)^{\dim R}}$$ 
where $h_0+h_1 t + \cdots +h_st^s
\in \ZZ[t]$.
We call this polynomial the {\it $h$-polynomial} of $R$ and denote it as $h_R(t)$.
Note that $\sum_{i=0}^s h_i \ne 0$ and $h_s \ne 0$.
We call the integer sequence $(h_0,h_1,\ldots,h_s)$ the {\it $h$-vector} of $R$.
We always have $h_0=1$.
If a semi-standard graded ring $R$ is Cohen--Macaulay,  its $h$-vector satisfies
$h_i \geq 0$ for any $i$. Moreover, if $R$ is standard graded, we have $h_i > 0$ for any $i$. 
For further information on the $h$-vectors of Cohen--Macaulay semi-standard (resp. standard) graded rings, see \cite{stanley1991hilbert} (resp. \cite{stanley2007combinatorics}).
Note that $h_s \leq r(R)$ holds. Moreover, we see that $d+a_R=s$ and $e(R)=h_0+h_1+\cdots +h_s$ (see \cite[Section 4.4]{bruns1998cohen}).
\end{itemize}

Let $R$ be a local (resp. positively graded) Cohen--Macaulay ring. Assume that $R$ satisfies $(*)$
and take a (graded) monomorphism $\phi: R \hookrightarrow \omega_R$ with $1_R \notin \m \omega_R$.
Put $C=\cok(\phi)$. Then $C$ is a Cohen--Macaulay $R$-module of dimension $d-1$ 
if $C\not=0$ (see \cite[Lemma 3.1]{goto2015almost}).
Note that $C=0$ if and only if $R$ is Gorenstein.
Moreover, there are some relation between $\mu(C)$ and $r(R)$.
We can check the following by the same proof as \cite[Proposition 2.2]{higashitani2016almost}.

\begin{prop}\label{gradedmuformula}
Let $R$ be a Cohen--Macaulay local (resp. positively graded) ring which satisfies $(*)$.
Then we have $\mu(C)=r(R)-1$.
\end{prop}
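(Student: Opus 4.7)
The plan is to reduce the statement to a dimension count on the quotient by the maximal ideal $\m$, using the standard identifications $\mu(M)=\dim_\kk(M/\m M)$ and $r(R)=\mu(\omega_R)$. Concretely, starting from the defining short exact sequence
\[
0 \to R \xrightarrow{\phi} \omega_R \to C \to 0
\]
(in the graded setting, replace the middle term by $\omega_R(-a_R)$, which has the same underlying $R$-module and in particular the same value of $\mu$), I would tensor with $R/\m$ over $R$ to obtain the right exact sequence
\[
R/\m \xrightarrow{\bar\phi} \omega_R/\m\omega_R \to C/\m C \to 0.
\]

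The key step is to upgrade this to a short exact sequence by verifying that $\bar\phi$ is injective; this is exactly where the hypothesis $(*)$ enters. The condition $\phi(1_R)\notin \m\omega_R$ says precisely that $\bar\phi(1)$ is nonzero in $\omega_R/\m\omega_R$, and since $R/\m$ is a field, any nonzero $\kk$-linear map out of $R/\m$ is automatically injective. In the graded case this condition is in fact automatic from $\phi$ being a degree-$0$ monomorphism into $\omega_R(-a_R)$: the element $\phi(1_R)$ sits in the component of $\omega_R$ of degree $-a_R$, which is the lowest nonzero degree, so it cannot lie in $\m\omega_R$ unless it is zero, contradicting injectivity of $\phi$.

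Comparing $\kk$-dimensions in the resulting short exact sequence then yields
\[
\mu(C)=\dim_\kk(C/\m C)=\dim_\kk(\omega_R/\m\omega_R)-\dim_\kk(R/\m)=\mu(\omega_R)-1=r(R)-1,
\]
which is the desired identity. I do not anticipate any substantive obstacle here, since the argument is essentially bookkeeping; the only mild subtlety is checking that the grading shift $\omega_R(-a_R)$ leaves $\mu$ unchanged, which is immediate because shifts do not alter the underlying module. The local and graded cases run in parallel once this observation is in place, which is why the proof of \cite[Proposition 2.2]{higashitani2016almost} transports verbatim.
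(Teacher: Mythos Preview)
Your argument is correct and is precisely the standard reduction the paper has in mind: the paper does not spell out its own proof but simply points to \cite[Proposition~2.2]{higashitani2016almost}, whose argument is exactly the tensor-with-$R/\m$ computation you describe, using $(*)$ to ensure $\bar\phi$ is nonzero (hence injective) on the one-dimensional source. There is nothing to add.
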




Let $R$ be a semi-standard graded ring, then the following result is true.
This is a semi-standard graded version of \cite[Proposition 2.4]{higashitani2016almost}.

\begin{prop}[{see \cite[Theorem 4.5]{miyashita2023comparing}}]\label{hformula}
Let $R$ be a Cohen--Macaulay semi-standard graded ring which satisfies $(*)$ and let $(h_0,\ldots,h_s)$ be its $h$-vector.
Take any monomorphism $\phi:R \hookrightarrow \omega_{R}(-a_R)$ with degree 0.
Then the Hilbert series of $C:=\cok(\phi)$ is
\begin{align}\label{ccc}
\H(C,t)= \frac{\sum_{j=0}^{s-1}((h_s+\cdots+h_{s-j})-(h_0+\cdots+h_j))t^j}{(1-t)^{\dim R-1}}.
\end{align}
In particular, $\delta_\phi(R)=\sum_{j=0}^{s-1}((h_s+\cdots+h_{s-j})-(h_0+\cdots+h_j))=\sum_{j=0}^{s}(2j-s)h_j$.
\end{prop}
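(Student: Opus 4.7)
The plan is to exploit the short exact sequence of graded $R$-modules
$$0 \longrightarrow R \xrightarrow{\ \phi\ } \omega_R(-a_R) \longrightarrow C \longrightarrow 0,$$
which is available by $(*)$ and the fact that $\phi$ has degree $0$. This reduces the statement to a Hilbert-series computation, after which the claim about $\delta_\phi(R)$ follows by evaluating at $t=1$.

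First I would record the Hilbert series of the middle term. By Cohen--Macaulay/Gorenstein duality for graded rings, $\H(\omega_R,t)=(-1)^d\,\H(R,t^{-1})$, where $d=\dim R$. Writing $\H(R,t)=h_R(t)/(1-t)^d$ with $h_R(t)=h_0+h_1t+\cdots+h_st^s$, and using the identity $a_R=s-d$, a short manipulation of signs gives
$$\H(\omega_R(-a_R),t)\ =\ t^{a_R}\,\H(\omega_R,t)\ =\ \frac{h_s+h_{s-1}t+\cdots+h_0t^s}{(1-t)^d}.$$
Thus $\omega_R(-a_R)$ has the ``reversed'' $h$-polynomial, as one would expect from the shift.

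Next I would subtract via additivity of Hilbert series on the short exact sequence:
$$\H(C,t)\ =\ \H(\omega_R(-a_R),t)-\H(R,t)\ =\ \frac{\sum_{j=0}^{s}(h_{s-j}-h_j)\,t^j}{(1-t)^d}.$$
By \cite[Lemma 3.1]{goto2015almost} (together with the observation in the definition of $(*)$ that $\phi(1_R)\notin\m\omega_R$), $C$ is Cohen--Macaulay of dimension $d-1$, so $(1-t)$ divides the numerator. This is consistent with $\sum_{j=0}^s(h_{s-j}-h_j)=0$, and the telescoping identity
$$\sum_{j=0}^{s}(h_{s-j}-h_j)\,t^j\ =\ (1-t)\sum_{j=0}^{s-1}\Bigl(\textstyle\sum_{k=0}^{j}(h_{s-k}-h_k)\Bigr)t^j$$
extracts the factor $(1-t)$ explicitly, yielding exactly the formula (\ref{ccc}).

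Finally, the multiplicity $\delta_\phi(R)=e(C)$ is obtained by evaluating the numerator of the simplified expression at $t=1$, i.e.\ summing $b_j:=\sum_{k=0}^{j}(h_{s-k}-h_k)$ for $j=0,\dots,s-1$. Swapping the order of summation gives $\sum_{k=0}^{s-1}(s-k)(h_{s-k}-h_k)$, and a reindexing $\ell=s-k$ in the positive part collapses this to $\sum_{j=0}^{s}(2j-s)h_j$. The computation is entirely mechanical; the only genuinely delicate point is the correct bookkeeping of the duality formula with the twist $(-a_R)$, which is where the reversal of the $h$-vector enters.
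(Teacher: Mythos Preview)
Your argument is correct. The paper itself does not supply a proof of this proposition; it simply records the statement and defers to \cite[Theorem~4.5]{miyashita2023comparing} (noting that the standard graded case is \cite[Proposition~2.4]{higashitani2016almost}). Your approach---computing $\H(\omega_R(-a_R),t)$ via Proposition~\ref{canonicalHilbert} and the identity $a_R+d=s$ to obtain the reversed $h$-polynomial, subtracting along the short exact sequence, and then extracting the factor $(1-t)$ by telescoping---is exactly the natural one and is essentially how the cited references proceed. One small remark: you invoke \cite[Lemma~3.1]{goto2015almost} to say $C$ is Cohen--Macaulay of dimension $d-1$, but for the Hilbert-series identity~(\ref{ccc}) this is not needed, since the telescoping is pure algebra; the dimension statement only enters to justify reading off $e(C)$ as the value of the numerator at $t=1$, and you should note separately that the formula also holds trivially when $C=0$ (the Gorenstein case).
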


The following is well-known, but we also provide a proof.

\begin{prop}\label{prop_type<e}
Let $R$ be a Cohen--Macaulay local (or positively graded) ring.
If $e(R) \geq 2$, then we have $r(R)<e(R)$.
\end{prop}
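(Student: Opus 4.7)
The plan is to reduce to the Artinian case, where the inequality becomes a simple dimension count for the socle of a finite-dimensional algebra. First I would perform a standard faithfully flat base change (e.g., $R \to R(X) := R[X]_{\fkm R[X]}$ in the local case, or a purely transcendental extension of the coefficient field in the graded case) to assume the residue field $\kk$ is infinite without changing $e(R)$, $r(R)$, or the Cohen--Macaulay property. Then, since $R$ is Cohen--Macaulay with infinite residue field, I can pick a minimal reduction $Q = (x_1, \ldots, x_d)$ of the maximal (graded) ideal whose generators form a regular sequence on $R$.

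Next I would pass to $\bar R = R/Q$, which is Artinian local. Two classical facts transfer the inequality from $\bar R$ back to $R$: minimality of the reduction gives $e(R) = \ell(R/Q) = \dim_\kk \bar R$, and the identification $\omega_{\bar R} \cong \omega_R/Q\omega_R$ (valid because $Q$ is also regular on $\omega_R$), combined with Nakayama's lemma, yields $r(\bar R) = \mu(\omega_R/Q\omega_R) = \mu(\omega_R) = r(R)$. So it suffices to prove $r(\bar R) < \ell(\bar R)$ whenever $\ell(\bar R) \geq 2$.

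In the Artinian setting I would use $r(\bar R) = \dim_\kk \Soc(\bar R)$. The hypothesis $\ell(\bar R) \geq 2$ forces the maximal ideal $\bar\fkm$ of $\bar R$ to be nonzero, so $1 \notin \Soc(\bar R)$ (otherwise $\bar\fkm = \bar\fkm \cdot 1 = 0$). Consequently $\Soc(\bar R)$ is a proper $\kk$-subspace of $\bar R$, giving the desired strict inequality. The only delicate step is the reduction itself --- ensuring that both $e$ and $r$ are preserved on passage to $\bar R$ --- but both preservations are standard results in Cohen--Macaulay theory (see \cite{bruns1998cohen}), so I do not anticipate any real obstacle.
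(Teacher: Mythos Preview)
Your proposal is correct and follows essentially the same route as the paper: reduce to the Artinian case by modding out a parameter regular sequence computing $e(R)$, then observe that $\Soc(\bar R)$ is a proper submodule once $\ell(\bar R)\ge 2$. You are in fact slightly more careful than the paper, which tacitly assumes the existence of such a sequence without mentioning the passage to an infinite residue field.
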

\begin{proof}
Put $d=\dim R$.
Since $R$ is Cohen--Macaulay,
there exists a regular sequence
$\xb$ on $\m$
such that $e(R)=l(R/\xb)$.
Thus we may assume $d=0$
since $r(R)=r(R/(\xb))$.
By the assumption,
we have
$\Soc(R) \neq R$ so $\Soc(R) \subset \m$,
where $\Soc(R)=\{x \in R : x\m=0 \}$.
Therefore, we have
$e(R)=\dim_\kk \Soc(R) \leq \dim_\kk \m<\dim_{\kk}R=e(R)$ so $r(R)<e(R)$.
\end{proof}

The following is a formula for calculating the Hilbert series of the canonical module.

\begin{prop}[{\cite[Corollary 4.4.6 (a)]{bruns1998cohen}}]\label{canonicalHilbert}
Let $\kk$ be a field and let $R$ be a $d$-dimensional Cohen--Macaulay positively graded $\kk$-algebra.
Then
$\H(\omega_{R}(-a_R),t)=(-1)^dt^{a_R}\H(R,t^{-1})$.
\end{prop}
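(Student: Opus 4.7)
The plan is to deduce the claim from the basic graded local duality identity $\H(\omega_R,t)=(-1)^d\H(R,t^{-1})$ recorded in Bruns--Herzog, combined with a trivial bookkeeping step for the degree shift $(-a_R)$.

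First I would record the elementary fact that $\H(M(k),t)=t^{-k}\H(M,t)$ for any graded $R$-module $M$ and any $k\in\ZZ$: indeed, $[M(k)]_n=M_{n+k}$, so
\begin{equation*}
\H(M(k),t)=\sum_n(\dim_\kk M_{n+k})\,t^n=t^{-k}\H(M,t).
\end{equation*}
Specialising to $M=\omega_R$ and $k=-a_R$ gives $\H(\omega_R(-a_R),t)=t^{a_R}\H(\omega_R,t)$, reducing the proposition to proving the unshifted identity $\H(\omega_R,t)=(-1)^d\H(R,t^{-1})$.

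For the remaining identity my route would be through a minimal graded free resolution. Write $R=S/I$ for $S=\kk[x_1,\ldots,x_n]$ standard graded, and take a minimal resolution $F_\bullet\to R\to 0$ with $F_i=\bigoplus_j S(-a_{i,j})$; the alternating sum of Hilbert series of the $F_i$ yields $\H(R,t)=\bigl(\sum_i(-1)^i\sum_j t^{a_{i,j}}\bigr)/(1-t)^n$. The dualised complex $\Hom_S(F_\bullet,S(-n))$ has terms $\bigoplus_j S(a_{i,j}-n)$, and by the Cohen--Macaulay hypothesis its only nonvanishing cohomology sits in position $n-d$, where it equals $\omega_R$. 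Taking Euler characteristics of this complex therefore gives
\begin{equation*}
\H(\omega_R,t)=(-1)^{n-d}\sum_i(-1)^i\sum_j\frac{t^{n-a_{i,j}}}{(1-t)^n},
\end{equation*}
and a direct simplification using $(1-t^{-1})^{-n}=(-1)^n t^n(1-t)^{-n}$ shows that the right-hand side coincides with $(-1)^d\H(R,t^{-1})$.

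Combining the two steps produces the stated equality. The only genuine obstacle is the Cohen--Macaulay concentration of $\Ext^i_S(R,S(-n))$ in a single degree together with its identification with $\omega_R$; once this input is granted, everything else is a formal Hilbert series manipulation, which is why the proposition is simply quoted from Bruns--Herzog rather than reproved in the paper.
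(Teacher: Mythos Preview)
The paper does not supply its own proof of this proposition; it is stated with a citation to Bruns--Herzog and used as a black box. Your argument is the standard one underlying that citation and is correct in outline.

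There is one small slip worth fixing. You write $R=S/I$ with $S=\kk[x_1,\ldots,x_n]$ \emph{standard} graded, but the hypothesis is only that $R$ is positively graded; its algebra generators may have degrees $d_1,\ldots,d_n>1$. The presentation should take $\deg x_i=d_i$, so that $\H(S,t)=\prod_i(1-t^{d_i})^{-1}$ and $\omega_S=S(-\sum_i d_i)$. The rest of your computation adapts verbatim: the dualised resolution has terms $\bigoplus_j S(a_{i,j}-\sum_k d_k)$, its unique cohomology (by Cohen--Macaulayness) is $\omega_R$ in position $n-d$, and the identity $(1-t^{-d_k})^{-1}=-t^{d_k}(1-t^{d_k})^{-1}$ still contributes the sign $(-1)^n$ needed to match $(-1)^d\H(R,t^{-1})$. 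With that correction your proof is complete.
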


\subsection{Numerical semigroups}\label{NUMENUME}
In this subsection, we recall the definitions and properties related to numerical semigroups.
See, e.g., \cite{rosales2009numerical} for the introduction to numerical semigroup.
A \textit{numerical semigroup} $H$ is a subset of $\mathbb{N}$ which is closed under addition,
 contains the zero element and whose complement in $\mathbb{N}$ is finite.
 We denote $\kk[H]$ as the numerical semigroup ring
 of $H$
 Every numerical semigroup $H$ admits a finite generating system, that is, there exist $a_1,\ldots, a_n\in H$ with 
 $H=\langle a_1,\ldots, a_n\rangle =\{\lambda_1a_1+\cdot \cdot \cdot +\lambda _na_n\mid \lambda_1,\ldots, \lambda _n\in \mathbb{N}\}$. \par
 Let $H$ be a numerical semigroup.
 We denote $G(H)=\{a_1<a_2<\cdot \cdot \cdot <a_n\}$ as the minimal generating system of $H$. 
 We call $a_1$ {\it the multiplicity} of $H$ and denote it by $\mult(H)$, and we call $n$ 
 {\it the embedding dimension} of $H$ and denote it by $\emb(H)$. In general, $\emb(H)\leq \mult(H)$.
The finite set $\mathbb{N}\setminus H$ is called {\it the set of gaps} of $H$.
 If $H$ is a numerical semigroup, the largest integer in $\mathbb{N}\setminus H$ is called 
 {\it Frobenius number} of $H$ and we denote it by $\F(H)$.
 We say that $H$ is {\it symmetric} if for every $z \in \mathbb{Z}$, either $z\in H$ or $\F(H)-z \in H$. 
 We say that an integer $x$ is a {\it pseudo-Frobenius number} of $H$ if 
$x \not \in H$ and $x+h \in H$ for all $h \in H, h\ne 0$.
 We denote the set of pseudo-Frobenius numbers of $H$ by $\PF(H)$.
 The cardinality in $\PF(H)$ is called the {\it type} of $H$, denoted by $\t(H)$.
 It is known that
$\mult(H)=e(\kk[H])$, $\t(H)=r(\kk[H])$ (see \cite{goto1978graded}).
Moreover, $H$ is symmetric if and only if $\kk[H]$ is Gorenstein (see \cite{HerzogKunz}).
Let $H$ be a numerical semigroup and let $s \in H \setminus \{0\}$;
the \textit{Ap\'{e}ry} set with respect to a nonzero $s \in H$ is defined as
$$\Ap(H,s) = \{a \in H : a-s \notin H\}.$$
Note that $|\Ap(H,s)|=s$ and there is a relation between Ap\'{e}ry sets and Hilbert series. 

\begin{prop}[{see \cite{ramirez2009numerical}}]\label{aperyHilbert}
Let $H$ be a numerical semigroup and let $s \in H \setminus \{0\}$;
$$\H(\kk[H],t)= \frac{1}{1-t^s}\sum_{a \in \Ap(H,s)} t^a$$
\end{prop}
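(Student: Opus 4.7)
The plan is to unpack the definition of the Hilbert series and reorganize the resulting sum using the Apéry set. Since $\kk[H]$ has $\{t^h : h \in H\}$ as a homogeneous $\kk$-basis (with $\dim_\kk \kk[H]_n = 1$ precisely when $n \in H$), we have
$$\H(\kk[H],t) = \sum_{h \in H} t^h,$$
so the content of the proposition is really a clean reindexing of this sum.

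The key combinatorial step I would carry out first is to show that the map
$$\Ap(H,s) \times \NN \longrightarrow H, \qquad (a,k) \longmapsto a + ks,$$
is a bijection. For surjectivity, given $h \in H$, I would iterate the operation $h \mapsto h - s$ as long as the result remains in $H$; since each step strictly decreases a nonnegative integer, the process terminates at some $a \in H$ with $a - s \notin H$, i.e., $a \in \Ap(H,s)$, and by construction $h = a + ks$ for some $k \in \NN$. For injectivity, suppose $a + ks = a' + k's$ with $k \geq k'$ and $a, a' \in \Ap(H,s)$; then $a' = a + (k-k')s$, and if $k > k'$ were to hold, we would have $a' - s = a + (k - k' - 1)s \in H$ (as $a, s \in H$ and $k - k' - 1 \geq 0$), contradicting $a' \in \Ap(H,s)$. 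Hence $k = k'$ and $a = a'$.

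Once the bijection is established, the formula follows by splitting the sum according to the first coordinate and summing a geometric series:
$$\sum_{h \in H} t^h \;=\; \sum_{a \in \Ap(H,s)}\; \sum_{k \geq 0} t^{a + ks} \;=\; \frac{1}{1-t^s}\sum_{a \in \Ap(H,s)} t^a.$$
The only nontrivial point in the argument is the uniqueness of the decomposition $h = a + ks$, which rests on the defining property of $\Ap(H,s)$; surjectivity and the geometric-series computation are routine, so I do not anticipate any real obstacle beyond writing this verification carefully.
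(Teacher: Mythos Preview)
Your argument is correct: the bijection $\Ap(H,s)\times\NN\to H$ is established cleanly, and the Hilbert series identity follows at once from the geometric-series summation. The paper does not supply its own proof of this proposition but simply cites \cite{ramirez2009numerical}, so there is nothing to compare against; your write-up is the standard self-contained verification and would serve perfectly well in its place.
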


 The concept of the gluing of numerical semigroups was defined in \cite{De,JCR}.
 
 \begin{defi}[{see \cite{De,JCR}}]\label{gluing} Let $H_1=\langle a_1, a_2, \ldots, a_n \rangle$
 and $H_2=\langle b_1, b_2, \ldots, b_m \rangle$ be numerical semigroups.
 Take $y \in H_1 \setminus G(H_1)$ and $x \in H_2 \setminus G(H_2)$
 such that $\gcd(x, y)=1$.
 We call
 \[H=\langle xH_1, yH_2 \rangle = \langle xa_1, xa_2, \ldots, xa_n, yb_1, yb_2, \ldots, yb_m \rangle \]
 the {\it gluing} of $H_1$ and $H_2$ (with respect to $x$ and $y$).
 \end{defi}

The following is well-known result about the gluing of numerical semigroups.
 
 \begin{thm}[{\cite[Theorem 6.3]{nari2013symmetries}}
 and \cite{De}]\label{NUMEE}
 Let $H_1$ and $H_2$ be numerical semigroups 
and let $H$ be the gluing of $H_1$ and $H_2$.
Then the following are true:
\begin{itemize}
\item[(1)] If $\kk[H_1]$ and $\kk[H_2]$ are Gorenstein,
then $\kk[H]$ is Gorenstein;
\item[(2)] If $\kk[H]$ is almost Gorenstein,
 then $\kk[H_1]$ and $\kk[H_2]$ are Gorenstein.
 \end{itemize}
 \end{thm}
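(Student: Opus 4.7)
My plan is to handle the two implications via distinct tools, both pivoting on the classical combinatorics of gluings. Write $H=\langle xH_1, yH_2\rangle$ with $\gcd(x,y)=1$, $y\in H_1\setminus G(H_1)$ and $x\in H_2\setminus G(H_2)$. The backbone is the Ap\'{e}ry-set factorisation
\[ \Ap(H,xy) \;=\; \{\,xw_1 + yw_2 : w_1 \in \Ap(H_1,y),\; w_2 \in \Ap(H_2,x)\,\}, \]
together with its consequence for pseudo-Frobenius sets
\[ \PF(H) \;=\; \{\,xf_1 + yf_2 + xy : f_1 \in \PF(H_1),\; f_2 \in \PF(H_2)\,\}, \]
which in particular yields $\t(H)=\t(H_1)\t(H_2)$ and $\F(H)=x\F(H_1)+y\F(H_2)+xy$. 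Both identities follow by a two-inclusion argument using $\gcd(x,y)=1$ and the unique representability, modulo $xy$, of each element of $H$ in the form $xh_1+yh_2$ with $h_i\in H_i$.

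For (1), since $\kk[H_i]$ is Gorenstein iff $H_i$ is symmetric iff $\t(H_i)=1$, the multiplicativity of the type gives $\t(H)=1$ and hence $\kk[H]$ is Gorenstein.

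For (2), I would argue by contrapositive: assume (without loss of generality) that $\kk[H_2]$ is not Gorenstein, so $\t(H_2)\ge 2$, and conclude that $\kk[H]$ is not almost Gorenstein. The key input is the characterization of one-dimensional almost Gorenstein rings by a symmetry condition on $\PF$: labelling $\PF(H)=\{f^{(1)}<\cdots<f^{(r)}\}$ with $f^{(r)}=\F(H)$, one needs $f^{(i)}+f^{(r-i)}=\F(H)$ for $1\le i\le r-1$. Substituting the product description of $\PF(H)$ and the formula for $\F(H)$, this condition on $\PF(H)$ translates into a system of relations on pairs in $\PF(H_1)\times \PF(H_2)$; the coprimality of $x,y$ together with the bounds $f_1+f_1'\le 2\F(H_1)$ and $f_2+f_2'\le 2\F(H_2)$ rigidifies the system, and a short case analysis forces $|\PF(H_i)|=1$ for both $i$, i.e., both $\kk[H_i]$ are Gorenstein.

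The main obstacle is Part (2): while the Ap\'{e}ry identity reduces everything to a combinatorial problem on pseudo-Frobenius sets, the bookkeeping needed to conclude $|\PF(H_i)|=1$ from the paired structure of $\PF(H)$ is delicate, since the B\'{e}zout-type relation $x(f_1+f_1'-\F(H_1))=y(\F(H_2)-f_2-f_2')$ admits a priori several integer solutions under the given size constraints, so one must carefully use the extremality of $f^{(1)}$ and $f^{(r)}$ to pin them down. Part (1), by contrast, is an essentially immediate consequence of the multiplicativity of the Cohen--Macaulay type delivered by the Ap\'{e}ry factorisation.
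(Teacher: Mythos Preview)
Your strategy is correct --- this is essentially Nari's original combinatorial argument --- but it is \emph{not} the route the paper takes. In the paper, Theorem~\ref{NUMEE} is stated as background, and the authors re-prove it (as Theorem~\ref{NUMEGLUINGOYO}) via their multiplicity machinery rather than by analysing the symmetry condition on $\PF(H)$. Concretely, they use the Ap\'ery factorisation (your first displayed identity, recorded as Lemma~\ref{Nari1}) only to compute Hilbert series, and from this they establish the formula
\[
\delta(\kk[H]) \;=\; x_2\,\delta(\kk[H_1]) + x_1\,\delta(\kk[H_2])
\]
(Proposition~\ref{ORENOFORMULA}). Since $x_i \notin G(H_i)$ forces $x_i > \mult(H_i) = e(\kk[H_i])$, this gives the inequality $\delta(\kk[H]) \ge e(\kk[H_2])\delta(\kk[H_1]) + e(\kk[H_1])\delta(\kk[H_2])$, and then the general Theorem~\ref{MMMMM} (together with $r(\kk[H])=r(\kk[H_1])r(\kk[H_2])$ from Lemma~\ref{Nari1}) finishes both directions at once.

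So both proofs share the Ap\'ery/$\PF$ product structure as input, but diverge sharply in how Part~(2) is extracted. You unwind the almost-symmetric condition $f^{(i)}+f^{(r-i)}=\F(H)$ inside the product description of $\PF(H)$ and run a B\'ezout-style case analysis; the paper sidesteps that bookkeeping entirely by packaging everything into the $\delta$-invariant and invoking the abstract inequality Lemma~\ref{mysterious}. Your approach is more elementary and self-contained for this single statement; the paper's approach buys generality, showing that the gluing result is one instance of a uniform multiplicity criterion that also covers tensor products, edge rings, and stable set rings.
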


 Now, let $H=\langle xH_1, yH_2 \rangle$ be the gluing of $H_1$ and $H_2$ with $\gcd(x, y)=1$ and $xy \in xH_1 \cap yH_2$.
 Nari (\cite{nari2013symmetries}) proved the following
 to show Theorem \ref{NUMEE} (2).
 
 \begin{lem}[{see \cite[Lemma 6.5 and Proposition 6.6]{nari2013symmetries}}]\label{Nari1}
 Let $H=\langle xH_1, yH_2 \rangle$ be the gluing of $H_1$ and $H_2$, then we have
 \[\Ap(H, xy)=\{xs + yt \mid s \in \Ap(H_1, y), t \in \Ap(H_2, x) \}, \]
 %
 \[\PF(H)=\{ xf + yf' + xy \mid f \in \PF(H_1), f'\in \PF(H_2) \}\]
 with $\t(H)=\t(H_1)\t(H_2)$.
 In particular, $F(H)=x\F(H_1)+y\F(H_2)+xy$.
 \end{lem}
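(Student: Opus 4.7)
My plan is to establish the two set equalities in turn, from which $\t(H) = \t(H_1)\t(H_2)$ and the formula for $\F(H)$ follow immediately. Throughout, every element of $H$ has the form $xh_1 + yh_2$ with $h_1 \in H_1$ and $h_2 \in H_2$, and $\gcd(x,y) = 1$ will be invoked repeatedly to parametrize integer solutions of linear equations in $x$ and $y$.

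For the Ap\'{e}ry description, I would proceed by counting. The map $(s, t) \mapsto xs + yt$ is injective on $\Ap(H_1, y) \times \Ap(H_2, x)$, because $xs + yt = xs' + yt'$ forces $y \mid s - s'$ and each residue class modulo $y$ has a unique representative in $\Ap(H_1, y)$. Its image therefore has $|\Ap(H_1, y)| \cdot |\Ap(H_2, x)| = yx = |\Ap(H, xy)|$ elements, so it suffices to check the containment in $\Ap(H, xy)$. If $xs + yt - xy = xa + yb$ with $a \in H_1, b \in H_2$, the general integer solution of $x(s - a) + y(t - b) = xy$ is $a = s - (k+1)y$, $b = t + kx$ for some $k \in \ZZ$. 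The Ap\'{e}ry conditions $s - y \notin H_1$ and $t - x \notin H_2$ force $k + 1 \leq 0$ and $k \geq 0$ respectively, a contradiction.

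For the pseudo-Frobenius description, I would verify both inclusions directly. In the $\supseteq$ direction, fix $f_i \in \PF(H_i)$ and set $f := xf_1 + yf_2 + xy$. Applying the same $k$-parametrization to $f = xa + yb$ gives $a = f_1 + (k+1)y$ and $b = f_2 - kx$: $f_1 \notin H_1$ forces $k \geq 0$ while $f_2 \notin H_2$ forces $k \leq -1$, incompatibly, so $f \notin H$. For any $h = xu + yv \in H \setminus \{0\}$ the double rewriting
\[
f + h = x(f_1 + u + y) + y(f_2 + v) = x(f_1 + u) + y(f_2 + v + x)
\]
exhibits $f + h \in H$: since at least one of $u, v$ is nonzero and $f_1 + y \in H_1$, $f_2 + x \in H_2$ by the $\PF$ property, one of the two displayed forms has both coefficients in the correct semigroup. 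Conversely, given $f \in \PF(H)$, the first part applied to $f + xy \in \Ap(H, xy)$ yields $f + xy = xs + yt$ with $s \in \Ap(H_1, y), t \in \Ap(H_2, x)$; setting $f_1 := s - y, f_2 := t - x$ we then have $f = xf_1 + yf_2 + xy$. For arbitrary $h_1 \in H_1 \setminus \{0\}$, the membership $f + xh_1 \in H$ (which holds because $xh_1 \in H \setminus \{0\}$) unwinds by the same parametrization: $b = t + kx \in H_2$ again forces $k \geq 0$, and then $a = s + h_1 - (k+1)y \in H_1$ combined with $y \in H_1$ yields $s + h_1 - y = f_1 + h_1 \in H_1$. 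A symmetric argument with $h_2 \in H_2 \setminus \{0\}$ handles $f_2 + h_2 \in H_2$, proving $f_i \in \PF(H_i)$.

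Finally, $\t(H) = \t(H_1)\t(H_2)$ follows from the injectivity of $(f_1, f_2) \mapsto xf_1 + yf_2 + xy$ (by the same residue uniqueness used above), and $\F(H) = x\F(H_1) + y\F(H_2) + xy$ is then immediate since $\F = \max \PF$ and $x, y > 0$. I expect the main obstacle to be the recurring $k$-parametrization: it is used three times, and in each instance the two integer constraints coming from $a \in H_1$ and $b \in H_2$ must be set up so that the appropriate Ap\'{e}ry or pseudo-Frobenius exclusion yields either a contradiction or the desired membership. Getting that bookkeeping consistent across the three applications is the main technical point.
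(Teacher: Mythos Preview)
The paper does not supply its own proof of this lemma; it quotes the statement from Nari's work (Lemma~6.5 and Proposition~6.6 there) and uses it as a black box in the proof of Proposition~\ref{ORENOFORMULA}. Your direct argument is correct and self-contained: the counting proof of the Ap\'{e}ry description, the two inclusions for $\PF(H)$ via the integer parametrization of solutions to $xA + yB = c$, and the concluding bijectivity and Frobenius formula all go through as written. Two cosmetic points: in the Ap\'{e}ry containment you should say explicitly that $xs+yt\in H$ (obvious, but the membership half of the Ap\'{e}ry condition) before checking $xs+yt-xy\notin H$; and in the injectivity of $(f_1,f_2)\mapsto xf_1+yf_2+xy$ the ``residue uniqueness'' you invoke is really the observation that $f_i+y\in\Ap(H_1,y)$ and $f_j'+x\in\Ap(H_2,x)$, which is immediate from the $\PF$ definition but worth a clause.
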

   
\subsection{Toric rings arising from graphs}
In this subsection, we prepare some notions and notation on graphs and define edge rings and stable set rings, which are toric rings arising from graphs.
For the fundamental materials on graph theory and toric rings, consult, e.g., \cite{diestel2017graph} and \cite{herzog2018binomial}, respectively.

For a positive integer $d$, let $[d]:=\{1,\ldots,d\}$. 
Consider a finite simple graph $G$ on the vertex set $V(G)=[d]$ with the edge set $E(G)$.
We say that $S \subset V(G)$ is a \textit{stable set} or an \textit{independent set} (resp. a \textit{clique}) 
if $\{v,w\} \not\in E(G)$ (resp. $\{v,w\} \in E(G)$) for any distinct vertices $v,w \in S$. 
Note that the empty set and each singleton are regarded as stable sets. 

We introduce two ways to construct a new graph from two given graphs $G_1$ and $G_2$:
\begin{itemize}
\item Suppose that $V(G_1)\cap V(G_2)$ is a clique of both $G_1$ and $G_2$.
Then the \textit{clique sum} $G_1\sharp G_2$ of $G_1$ and $G_2$ along $V(G_1)\cap V(G_2)$ is the graph on the vertex set $V(G_1)\cup V(G_2)$ with the edge set $E(G_1)\cup E(G_2)$.
\item Suppose that $V(G_1)\cap V(G_2)=\emptyset$.
Then the \textit{join} $G_1+G_2$ of $G_1$ and $G_2$ is the graph on the vertex set $V(G_1)\cup V(G_2)$ with the edge set $E(G_1)\cup E(G_2)\cup \{\{i,j\} : i \in V(G_1), j\in V(G_2)\}$.
\end{itemize}

\medskip

Now, we define our toric rings:
\begin{defi}
Consider the morphism $\phi_G$ of $\kk$-algebras:
$$\phi_G : \kk[x_{\{i,j\}} : \{i,j\}\in E(G)] \to \kk[t_1,\ldots,t_d], \text{ induced by } \phi_G(x_{\{i,j\}})=t_it_j.$$
Then we denote the image (resp. the kernel) of $\phi_G$ by $\kk[G]$ (resp. $I_G$) and call $\kk[G]$ the \textit{edge ring} of $G$. 
\end{defi}
\begin{defi}
Consider the morphism $\psi_G$ of $\kk$-algebras:
$$\psi_G : \kk[x_S : S \text{ is a stable set of }G] \to \kk[t_1,\ldots,t_{d+1}], \text{ induced by } \psi_G(x_S)=\left(\prod_{i \in S}t_i\right)t_{d+1}.$$
Then we denote the image (resp. the kernel) of $\psi_G$ by $\Stab_\kk(G)$ (resp. $J_G$) and call $\Stab_\kk(G)$ the \textit{stable set ring} of $G$.
\end{defi}
The edge ring (resp. stable set ring) of $G$ is regarded as a standard graded $\kk$-algebra by setting $\deg(\phi_G(x_{\{i,j\}}))=1$ (resp. $\deg(\psi_G(x_S))=1$).
Moreover, we can regard $I_{G_i}$ (resp. $J_{G_i}$) as a subset of $I_G$ (resp. $J_G$) since each edge (resp. stable set) of $G_1$ or $G_2$ is that of $G_1\sharp G_2$ (resp. $G_1+G_2$).

\section{Conditions for almost Gorenstein rings}

In this section, we show our main theorem and discuss conditions appearing in our main theorem.
Moreover, we apply our theorem to tensor products
of semi-standard graded rings and the quotient rings divided by
its regular sequences.

\subsection{Proof of main theorem}
The following is a key lemma.

\begin{lem}\label{mysterious}
Let $A$ and $B$ be Cohen--Macaulay local
(or graded) rings with $e(A) > 1$ and $e(B)>1$.
If there exist $\gamma_1, \gamma_2 \in \RR_{\geq 0}$ such that
$r(A)\leq \gamma_1+1$, $r(B)\leq \gamma_2+1$
and
\begin{align}\label{HUTOUSIKII}
e(B)\gamma_1+e(A)\gamma_2 \leq 
 r(A)r(B)-1,
\end{align}
then $A$ and $B$ are Gorenstein and $\gamma_i=0$ for $i=1,2$.

\begin{proof}
We get the following inequality by the assumptions:
\begin{equation}\label{HUTOUSIKI}
e(B)\gamma_1+e(A)\gamma_2 \leq r(A)r(B)-1 \leq \gamma_1\gamma_2+\gamma_1+\gamma_2.
\end{equation}
Thus we have
\begin{equation*}
X:=(e(A)-1-\gamma_1)(e(B)-1-\gamma_2)-(e(A)-1)(e(B)-1) \geq 0.
\end{equation*}
It follows from $(e(A)-1)(e(B)-1)>0$ and $X\geq 0$ that $(e(A)-1-\gamma_1)(e(B)-1-\gamma_2)>0$, equivalently, ``$e(A)-1-\gamma_1> 0$ and $e(B)-1-\gamma_2> 0$'' or ``$e(A)-1-\gamma_1< 0$ and $e(B)-1-\gamma_2< 0$''.
Suppose that $e(A)-1-\gamma_1< 0$ and $e(B)-1-\gamma_2< 0$.
This hypothesis and Proposition~\ref{prop_type<e} imply that
\begin{equation}\notag
\begin{split}
e(B)\gamma_1+e(A)\gamma_2
&> e(B)(e(A)-1)+e(A)(e(B)-1)\\
&\geq r(B)(r(A)-1)+r(A)(r(B)-1)\\
&\geq r(A)r(B)-1,
\end{split}
\end{equation}
a contradiction to (\ref{HUTOUSIKII}).

Thus we have $e(A)-1-\gamma_1> 0$ and $e(B)-1-\gamma_2> 0$.
On the other hand, in this situation, we can see that $X<0$ unless $\gamma_1=\gamma_2=0$.
Then we obtain $\gamma_1=\gamma_2=0$, and hence $r(A)=r(B)=1$ by our assumption. Therefore, $A$ and $B$ are Gorenstein, as desired.
\end{proof}
\end{lem}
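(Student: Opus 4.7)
The plan is to combine the hypotheses into a quadratic inequality in the nonnegative reals $\gamma_1, \gamma_2$ and analyze it by a sign case split. First I would multiply $r(A) \leq \gamma_1+1$ and $r(B) \leq \gamma_2+1$ to get $r(A)r(B) - 1 \leq \gamma_1\gamma_2 + \gamma_1 + \gamma_2$, and substitute this upper bound into (\ref{HUTOUSIKII}) to obtain $e(B)\gamma_1 + e(A)\gamma_2 \leq \gamma_1\gamma_2 + \gamma_1 + \gamma_2$. A routine rearrangement puts this in the factored form
\[
(e(A)-1-\gamma_1)(e(B)-1-\gamma_2) \geq (e(A)-1)(e(B)-1) > 0,
\]
where the positivity uses $e(A), e(B) \geq 2$. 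Consequently the two factors on the left must share a common strict sign.

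Next I would rule out the case where both factors are negative, namely $\gamma_1 > e(A)-1$ and $\gamma_2 > e(B)-1$. Here Proposition~\ref{prop_type<e} gives $r(A) \leq e(A)-1 < \gamma_1$ and $r(B) < \gamma_2$, so a direct comparison yields $e(B)\gamma_1 + e(A)\gamma_2 > r(B)(r(A)-1) + r(A)(r(B)-1) \geq r(A)r(B)-1$, contradicting (\ref{HUTOUSIKII}). In the remaining case $\gamma_1 < e(A)-1$ and $\gamma_2 < e(B)-1$, I would expand the factored inequality to $\gamma_1\gamma_2 \geq (e(A)-1)\gamma_2 + (e(B)-1)\gamma_1$; since $\gamma_1 < e(A)-1$ gives $\gamma_1\gamma_2 \leq (e(A)-1)\gamma_2$, subtracting leaves $(e(B)-1)\gamma_1 \leq 0$, so $\gamma_1=0$, after which the inequality forces $\gamma_2=0$. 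The conclusion $r(A) = r(B) = 1$ then follows at once from $r(A) \leq \gamma_1 + 1$ and $r(B) \leq \gamma_2 + 1$, so $A$ and $B$ are Gorenstein.

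The main obstacle is the initial algebraic step: recognizing that the quadratic inequality in $\gamma_1, \gamma_2$ rearranges to the factored form $(e(A)-1-\gamma_1)(e(B)-1-\gamma_2) \geq (e(A)-1)(e(B)-1)$, which is what makes the subsequent sign analysis tractable. Once this factorization is in hand, the two cases are dispatched by elementary inequalities together with the single nontrivial input $r < e$ supplied by Proposition~\ref{prop_type<e}.
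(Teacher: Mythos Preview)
Your proof is correct and follows essentially the same approach as the paper: derive the factored inequality $(e(A)-1-\gamma_1)(e(B)-1-\gamma_2)\geq (e(A)-1)(e(B)-1)$, eliminate the ``both factors negative'' case via Proposition~\ref{prop_type<e}, and in the ``both positive'' case force $\gamma_1=\gamma_2=0$. The only difference is cosmetic: where the paper asserts ``$X<0$ unless $\gamma_1=\gamma_2=0$'' in one line, you spell out the argument by bounding $\gamma_1\gamma_2\leq (e(A)-1)\gamma_2$ and subtracting, which is exactly the computation behind that assertion.
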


\begin{thm}\label{MMMMMlocal}
Let $A$, $B$ and $R$ be Cohen--Macaulay local rings.
Assume that $A$ and $B$ satisfy  $(*)$ 
and there exists an $R$-monomorphism $\phi:R\hookrightarrow \omega_R$.
Moreover, we assume $e(A)>1$, $e(B)>1$ and
\begin{align}\label{FFRT}
\delta_\phi(R) \geq e(B)\delta({A})+e(A)\delta({B}).
\end{align}
Then the following conditions are equivalent:
\begin{itemize}
\item[(1)] $R$ is almost Gorenstein with respect to $\phi$ and $r(R) \leq r(A)r(B) $;
\item[(2)] $A$ and $B$ Gorenstein
and the equality of (\ref{FFRT}) holds.
\end{itemize}
If this is the case, $R$ is Gorenstein.
\begin{proof}
It is clear that $(2) \Rightarrow (1)$ since $R$ is Gorenstein if and only if $\delta_\phi({R})=0$.
We show (1) implies (2).
If $R$ is regular, then $R$ is Gorenstein so the assertion follows from $\delta_\phi(R)=0$. So we may assume that $R$ is not regular.
Since $R$ is non-regular almost Gorenstein, we have $\delta_\phi({R})=r(R)-1$ by \cite[Corollary 3.10]{goto2015almost} (if we consider the graded case, the equality holds without this corollary).
Then we have
\begin{equation}\label{HUTOUSIKI}
e(B)\delta({A})+e(A)\delta({B}) \leq \delta_\phi(R)=r(R)-1\leq r(A)r(B)-1.
\end{equation}
Therefore,
since $r(A)-1\leq \delta({A})$ and $r(B)-1\leq \delta(B)$ by Proposition~\ref{gradedmuformula}
and
$e(B)\delta({A})+e(A)\delta({B}) \leq r(A)r(B)-1$,
$A$ and $B$ are Gorenstein and $\delta(A)=\delta(B)=0$ by Lemma~\ref{mysterious}.
Then we have $e(B)\delta({A})+e(A)\delta({B})=\delta_\phi({R})=0$
by (\ref{HUTOUSIKI}), as desired.
\end{proof}
\end{thm}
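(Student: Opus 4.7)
The plan is to reduce the hard direction to Lemma~\ref{mysterious} by identifying, in the non-regular case, the invariant $\delta_\phi(R)$ with $r(R)-1$. The easy direction $(2)\Rightarrow(1)$ comes for free: if $A$ and $B$ are Gorenstein then the monomorphisms witnessing $(*)$ are isomorphisms, so $\delta(A)=\delta(B)=0$, and forcing equality in (\ref{FFRT}) gives $\delta_\phi(R)=e(\cok(\phi))=0$. Since $\cok(\phi)$ is either zero or Cohen--Macaulay of dimension $d-1$, a vanishing multiplicity forces $\cok(\phi)=0$, so $R$ is Gorenstein. This simultaneously settles the ``if this is the case'' clause, so no extra work is needed there.

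For $(1)\Rightarrow(2)$ I would split on whether $R$ is regular. If $R$ is regular then $R$ is Gorenstein and $\delta_\phi(R)=0$; plugging into (\ref{FFRT}) together with $e(A),e(B)>0$ and $\delta(A),\delta(B)\ge 0$ collapses the inequality to $\delta(A)=\delta(B)=0$, which (by the same multiplicity-zero argument as above, using $(*)$ on $A$ and $B$) forces $A$ and $B$ Gorenstein with equality in (\ref{FFRT}).

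The substantive case is when $R$ is non-regular. The key step, and the one I would treat most carefully, is to argue that any monomorphism $\phi$ witnessing the almost Gorenstein condition automatically satisfies $\phi(1_R)\notin\fkm\omega_R$: this is precisely \cite[Corollary 3.10]{goto2015almost}, so $R$ itself satisfies $(*)$ with respect to $\phi$. Then Proposition~\ref{gradedmuformula} applies to give $\mu(\cok(\phi))=r(R)-1$, and the Ulrich hypothesis upgrades this to $\delta_\phi(R)=e(\cok(\phi))=\mu(\cok(\phi))=r(R)-1$. Combined with the assumed inequalities $\delta_\phi(R)\ge e(B)\delta(A)+e(A)\delta(B)$ and $r(R)\le r(A)r(B)$, this yields the pivotal chain
\begin{equation*}
e(B)\delta(A)+e(A)\delta(B)\;\le\;\delta_\phi(R)\;=\;r(R)-1\;\le\;r(A)r(B)-1.
\end{equation*}

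Once this chain is in place, Lemma~\ref{mysterious} finishes the job: since $A$ and $B$ satisfy $(*)$, Proposition~\ref{gradedmuformula} supplies $r(A)-1\le\delta(A)$ and $r(B)-1\le\delta(B)$, so setting $\gamma_1:=\delta(A)$ and $\gamma_2:=\delta(B)$ meets every hypothesis of the lemma. Its conclusion $\gamma_1=\gamma_2=0$ and $r(A)=r(B)=1$ then makes $A$ and $B$ Gorenstein, and reading the displayed chain backwards collapses it to equality in (\ref{FFRT}). The main obstacle I foresee is the step producing $\delta_\phi(R)=r(R)-1$, because it is the only place where we must leverage external input (\cite[Corollary 3.10]{goto2015almost} plus the Ulrich condition) rather than the multiplicity bookkeeping; everything downstream is controlled entirely by Lemma~\ref{mysterious}.
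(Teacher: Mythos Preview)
Your proposal is correct and follows essentially the same route as the paper's own proof: both split $(1)\Rightarrow(2)$ on regularity of $R$, invoke \cite[Corollary 3.10]{goto2015almost} in the non-regular case to obtain $\delta_\phi(R)=r(R)-1$, and then feed the resulting chain of inequalities into Lemma~\ref{mysterious} with $\gamma_i=\delta$-values. Your write-up is in fact slightly more transparent than the paper's at the key step, since you unpack ``$\delta_\phi(R)=r(R)-1$'' into its three constituents (Corollary~3.10 gives $\phi(1_R)\notin\fkm\omega_R$, Proposition~\ref{gradedmuformula} gives $\mu(\cok\phi)=r(R)-1$, Ulrich gives $e=\mu$), whereas the paper compresses this into a single citation.
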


\begin{thm}\label{MMMMM}
Let $A$, $B$ and $R$ be Cohen--Macaulay positively graded rings.
Assume that $A$, $B$ and $R$ satisfy $(*)$. Moreover, we assume $e(A)>1$, $e(B)>1$ and
\begin{align}\label{multiplicitycondition1}
\delta_\phi(R) \geq e(B)\delta({A})+e(A)\delta({B}).
\end{align}
Then the following conditions are equivalent:
\begin{itemize}
\item[(1)] $R$ is almost Gorenstein with respect to $\phi$ and $r(R) \leq r(A)r(B) $;
\item[(2)] $R$ is Gorenstein;
\item[(3)] $A$ and $B$ Gorenstein
and the equality of (\ref{multiplicitycondition1}) holds.
\end{itemize}
\begin{proof}
We can show $(1) \Rightarrow (3) \Rightarrow (2)$
by the same proof as Theorem \ref{MMMMMlocal}.
We show that $(2) \Rightarrow (1)$.
It is enough to show that $\cok(\phi)=0$.
Since $R$ is Gorenstein,
there exists a graded isomorphism $\alpha: \omega_R(-a_R) \rightarrow R$.
Put $\psi=\phi \circ \alpha$.
Then $\psi$ is an isomorphism since $(\omega_R(-a_R))_i$ is finite dimensional vector space for any $i \in \ZZ$. Thus we get $\cok(\phi)=0$.
\end{proof}
\end{thm}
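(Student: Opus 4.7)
The plan is to establish the three-way equivalence by proving $(1) \Rightarrow (3) \Rightarrow (2) \Rightarrow (1)$. The first two implications follow the same strategy as in Theorem \ref{MMMMMlocal}, exploiting Lemma \ref{mysterious} once the almost Gorensteinness of $R$ has been converted into a multiplicity inequality. The implication $(2) \Rightarrow (1)$ is specific to the graded setting and is handled by a short argument using that graded pieces are finite-dimensional $\kk$-vector spaces.

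For $(1) \Rightarrow (3)$, I would first use that $C := \cok(\phi)$ is Ulrich to deduce $\delta_\phi(R) = e(C) = \mu(C)$, and then combine with Proposition \ref{gradedmuformula} to get $\delta_\phi(R) = r(R) - 1$. The hypotheses (\ref{multiplicitycondition1}) and $r(R) \leq r(A) r(B)$ then yield
\[ e(B)\,\delta(A) + e(A)\,\delta(B) \;\leq\; r(A) r(B) - 1. \]
Applying Proposition \ref{gradedmuformula} to $A$ and to $B$, together with the general bound $e(\,\cdot\,) \geq \mu(\,\cdot\,)$ on cokernels, gives $r(A) - 1 \leq \delta(A)$ and $r(B) - 1 \leq \delta(B)$. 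These are exactly the hypotheses of Lemma \ref{mysterious} with $\gamma_1 = \delta(A)$ and $\gamma_2 = \delta(B)$, so $A$ and $B$ are Gorenstein with $\delta(A) = \delta(B) = 0$, which forces equality in (\ref{multiplicitycondition1}).

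For $(3) \Rightarrow (2)$, the equality in (\ref{multiplicitycondition1}) together with $\delta(A) = \delta(B) = 0$ gives $e(\cok(\phi)) = 0$. Since $\cok(\phi)$ is either zero or Cohen--Macaulay of dimension $d - 1$ by the discussion preceding Proposition \ref{gradedmuformula}, vanishing multiplicity forces $\cok(\phi) = 0$; hence $\phi$ is an isomorphism and $R \cong \omega_R(-a_R)$ is Gorenstein. Finally, for $(2) \Rightarrow (1)$, if $R$ is Gorenstein there is a graded isomorphism $\alpha : \omega_R(-a_R) \to R$, and $\alpha \circ \phi$ is a degree-$0$ $R$-linear injection of $R$ into itself. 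In each degree this is an injection between finite-dimensional $\kk$-vector spaces of equal dimension, hence a bijection; so $\phi$ itself is an isomorphism, the cokernel vanishes (and is trivially Ulrich), and $r(R) = 1 \leq r(A) r(B)$. The most delicate point in this whole chain is really in $(1) \Rightarrow (3)$: one only has the inequalities $r(A) - 1 \leq \delta(A)$ and $r(B) - 1 \leq \delta(B)$ rather than equalities (since $A$ and $B$ are only assumed to satisfy $(*)$, not to be almost Gorenstein), so the whole argument hinges on Lemma \ref{mysterious} being formulated precisely to handle this asymmetric input.
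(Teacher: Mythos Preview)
Your proposal is correct and follows essentially the same route as the paper: the implications $(1)\Rightarrow(3)\Rightarrow(2)$ are obtained exactly as in Theorem~\ref{MMMMMlocal} via Proposition~\ref{gradedmuformula} and Lemma~\ref{mysterious}, and for $(2)\Rightarrow(1)$ the paper likewise composes $\phi$ with a graded isomorphism $\alpha:\omega_R(-a_R)\to R$ and uses finite-dimensionality of the graded pieces to conclude that $\phi$ is an isomorphism. The only cosmetic difference is that the paper looks at $\phi\circ\alpha$ as a self-map of $\omega_R(-a_R)$, while you look at $\alpha\circ\phi$ as a self-map of $R$; the conclusion is the same.
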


\begin{rem}\label{independence}
If we add the assumption that $R$ is semi-standard graded to Theorem \ref{MMMMM},
$(1)$ does not depend on how $\phi$ is chosen,
that is, $(1)$ if and only if $R$ is almost Gorenstein and $r(R) \leq r(A)r(B)$ because of
Proposition \ref{gradedmuformula} and \ref{hformula}.
\end{rem}

\begin{rem}\label{drop_assumption}
(a) The essential part of the proof of the above theorem is $(1) \Rightarrow (3)$.
In the above theorem, the assertion does not hold in general if we drop the assumption that $e(A) > 1$ and $e(B)>1$; even if $R$ is almost Gorenstein and $r(R)\le r(A)r(B)$, $B$ is not necessarily Gorenstein when $e(A)=1$, that is, in the previous theorem, (1) does not imply (3).
However, it can be derived that $B$ is almost Gorenstein as follows:

Suppose that $e(A)=1$. 
Then we can rewrite (\ref{multiplicitycondition1}) as $\delta(B)\le \delta(R)$.
Since $R$ is almost Gorenstein and $r(R)\le r(B)$, we have
\begin{align}\label{rem_eq}
\delta(B)\le \delta(R) =r(R)-1\le r(B)-1 \le \delta(B).
\end{align}
Therefore, we get $\delta(B)=r(B)-1$, and hence $B$ is almost Gorenstein.

(b)
If we drop the assumption $r(R) \leq r(A)r(B)$ from (1), this does not imply (3) in general.
In fact, put $R=\QQ[s,st^{18},st^{21},st^{23},st^{26}]$, $A=\QQ[s,st,st^2]$ and $B=\QQ[s,st^9,st^{10},st^{13}]$.
By using $\mathtt{Macaulay2}$ ({\cite{M2}}), we can check $h_R(t)=1+3t+5t^2+7t^3+6t^4+3t^5+t^6$,
$h_{A}(t)=1+t$ and $h_{B}(t)=1+2t+3t^2+4t^3+2t^4+t^5$.
Moreover, $R$ is almost Gorenstein and
the equality of (\ref{multiplicitycondition1}) holds.
However, $B$ is not Gorenstein and $r(R)=3>2=r(A)r(B)$.
\end{rem}




\subsection{Sufficient conditions to satisfy the multiplicity condition}

In this subsection, we provide
sufficient conditions to satisfy the equality of (\ref{multiplicitycondition1})
for semi-standard graded rings.

Let $h=h(t)=\sum_{i=0}^sh_it^i \in \ZZ[t]$
and put $e(h):=\sum_{i=0}^sh_i=h(1)$, $\delta(h):=\sum_{i=0}^s(2i-s)h_i$.

\begin{rem}
Let $R$ be a Cohen--Macaulay semi-standard graded ring satisfies $(*)$
and let $h_R(t)$ be its $h$-polynomial.
Then we have
$e(R)=e(h_R)$ and $\delta(R)=\delta(h_R)$.
\end{rem}

\begin{prop}\label{ecformula}
Let $h,g \in \ZZ[t]$,
we have $\delta(hg)=e(h)\delta(g)+e(g)\delta(h)$.
In particular, if $g=1+t+\cdots+t^{a-1}$ for some integer $a>0$,
we have $\delta(hg)=a\delta(h)$.
\begin{proof}
Put $h(t)=a_0+a_1t+\cdots+a_nt^n$, $g(t)=b_0+b_1t+\cdots+b_mt^m$.
Then we have
$$\delta(hg)=\sum_{i=0}^{m+n} \left( (2i-(m+n)) \sum_{j=0}^i a_{i-j}b_j  \right)
=\sum_{j=0}^m \left(\sum_{i=0}^{m+n}(2i-(m+n))a_{i-j} \right)b_j.$$
Now we calculate the coefficient $A_j$ of $b_j$ in $\delta(hg)$ for each $1 \leq j \leq m$. 
Note that $a_i=0$ if $i<0$ or $n<i$.
Then we have
\begin{align*}
A_j=\sum_{i=j}^{j+n}(2i-(m+n))a_{i-j}&=\sum_{i=0}^{n}(2(i+j)-(m+n))a_{i} \\
&=\sum_{i=0}^n ((2i-n)a_i+(2j-m)a_i)
=\delta(h)+(2j-m)e(h).
\end{align*}
Therefore,
$$\delta(hg)=\sum_{i=0}^mA_ib_i=e(g)\delta(h)+e(h)\sum_{i=0}^m(2i-m)b_i=e(g)\delta(h)+e(h)\delta(g).$$
\end{proof}
\end{prop}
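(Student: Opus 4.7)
The plan is to prove both identities by direct computation on coefficients, exploiting the key algebraic observation that the weight $2k-(m+n)$ appearing in $\delta(hg)$ splits additively as $(2i-n)+(2j-m)$ whenever $i+j=k$. This is essentially the statement that a symmetric linear weight around the midpoint of the degree is additive with respect to polynomial multiplication, and it is the one nontrivial idea in the proof.

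First I would write $h(t)=\sum_{i=0}^n a_i t^i$ and $g(t)=\sum_{j=0}^m b_j t^j$ (so $\deg h=n$ and $\deg g=m$, hence $\deg(hg)=m+n$), expand
\[
\delta(hg)=\sum_{k=0}^{m+n}(2k-(m+n))\,c_k,\qquad c_k=\sum_{i+j=k}a_i b_j,
\]
and substitute to obtain a double sum over pairs $(i,j)$ with $0\le i\le n$, $0\le j\le m$. Then I would use the decomposition
\[
2k-(m+n)=(2i-n)+(2j-m)
\]
and split the resulting sum along this decomposition, which separates into a product: one half is $\sum_i (2i-n)a_i \cdot \sum_j b_j = \delta(h)\,e(g)$, and the other is $\sum_i a_i\cdot\sum_j(2j-m)b_j = e(h)\,\delta(g)$. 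Summing gives the identity $\delta(hg)=e(h)\delta(g)+e(g)\delta(h)$.

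For the second assertion, take $g(t)=1+t+\cdots+t^{a-1}$, so $e(g)=a$ and $\delta(g)=\sum_{j=0}^{a-1}(2j-(a-1))$. This last sum telescopes (the pairing $j\leftrightarrow (a-1)-j$ shows it is zero), giving $\delta(g)=0$. Substituting into the general formula yields $\delta(hg)=a\,\delta(h)$.

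There is no serious obstacle here; the only subtlety is making sure the indexing $s=\deg h$ used in the definition of $\delta$ is handled consistently when passing from $h$ and $g$ to the product $hg$, which is automatic once the splitting $2k-(m+n)=(2i-n)+(2j-m)$ is in place. The approach coincides in substance with the computation already sketched by the authors, but the additive split of the weight makes the double summation collapse without having to compute the coefficient $A_j$ of $b_j$ explicitly.
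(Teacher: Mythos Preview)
Your proposal is correct and is essentially the same argument as the paper's: both hinge on the additive decomposition $2k-(m+n)=(2i-n)+(2j-m)$ for $k=i+j$, after which the double sum factors into $\delta(h)e(g)+e(h)\delta(g)$. The only difference is cosmetic---the paper first groups terms by $b_j$ and records the coefficient $A_j$ before applying the split, whereas you apply the split directly to the double sum; the content is identical.
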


\begin{cor}\label{h_PRODUCTcondition}
Let $A$, $B$ and $R$ be Cohen--Macaulay semi-standard graded rings
satisfy $(*)$.
If $h_R(t)=h_{A}(t)h_{B}(t)$,
we have
$\delta(R)=e(B)\delta({A})+e(A)\delta({B})$.
\end{cor}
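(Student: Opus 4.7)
The plan is to observe that this corollary is essentially a direct translation of the purely polynomial identity in Proposition~\ref{ecformula} into the ring-theoretic language, so almost no new work is required.

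First, I would invoke the Remark immediately preceding Proposition~\ref{ecformula}, which tells us that for any Cohen--Macaulay semi-standard graded ring $X$ satisfying $(*)$, we have $e(X) = e(h_X)$ and $\delta(X) = \delta(h_X)$. Applying this to $A$, $B$, and $R$ converts the conclusion we want into the statement
\[
\delta(h_R) = e(h_B)\,\delta(h_A) + e(h_A)\,\delta(h_B).
\]

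Second, I would substitute the hypothesis $h_R(t) = h_A(t)\,h_B(t)$ into the left-hand side, giving $\delta(h_R) = \delta(h_A h_B)$. At this point Proposition~\ref{ecformula}, applied to the polynomials $h = h_A(t)$ and $g = h_B(t)$, yields exactly $\delta(h_A h_B) = e(h_A)\,\delta(h_B) + e(h_B)\,\delta(h_A)$. Combining this with the identifications from the first step produces the desired equality.

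There is essentially no obstacle here: the corollary is a routine bookkeeping consequence of Proposition~\ref{ecformula} together with the interpretation of $e(\cdot)$ and $\delta(\cdot)$ for semi-standard graded rings in terms of their $h$-polynomials. The only thing to be careful about is ensuring that the hypothesis that each of $A$, $B$, $R$ is Cohen--Macaulay semi-standard graded and satisfies $(*)$ is used precisely to legitimize the passage between the ring-theoretic invariants $e(X), \delta(X)$ and the polynomial invariants $e(h_X), \delta(h_X)$ (the latter is guaranteed by Proposition~\ref{hformula} for $\delta$, while $e$ is standard).
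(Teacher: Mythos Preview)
Your proposal is correct and matches the paper's approach exactly: the paper states this corollary without proof, treating it as an immediate consequence of Proposition~\ref{ecformula} together with the preceding Remark identifying $e(X)=e(h_X)$ and $\delta(X)=\delta(h_X)$.
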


\begin{ex}
$A$, $B$ and $R$ in Example \ref{drop_assumption} (b) satisfy $h_{A}(t)h_{B}(t)=h_R(t)$.
Even by using Corollary \ref{h_PRODUCTcondition}, it can be confirmed that
$\delta(R)=e(B)\delta({A})+e(A)\delta({B})$.
\end{ex}

Now we apply Theorem \ref{MMMMM} to the tensor product of semi-standard graded rings and its quotient rings divided by a homogeneous regular sequence.

\begin{cor}\label{cor_tensor}
Let $A$ and $B$ be semi-standard graded Cohen--Macaulay $\kk$-algebras over a field $\kk$
such that $A$ and $B$ satisfy $(*)$ and $e(A) > 1$ and $e(B) > 1$.
In addition, let $T=A\otimes_\kk B$ and
let $\xb=x_1,\ldots,x_n$
be a homogeneous regular sequence on $T$
 with $\deg(x_k)=a_k$ for $1 \le k \le n$
such that $T/(\xb)$
satisfies $(*)$.
Set $R=T$ or $R=T/(\xb)$,
then the following conditions are equivalent:
\begin{itemize}
\item[(1)] $R$ is almost Gorenstein;
\item[(2)] $R$ is Gorenstein;
\item[(3)] $A$ and $B$ are Gorenstein.
\end{itemize}
\end{cor}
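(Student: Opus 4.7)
The plan is to reduce the corollary to Theorem~\ref{MMMMM} by verifying its hypotheses for both $R=T$ and $R=T/(\xb)$, with the semi-standard graded version of Remark~\ref{independence} eliminating any dependence on the choice of $\phi$ in condition (1).

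First, I would verify the structural hypotheses. Both $T$ and $T/(\xb)$ are Cohen--Macaulay semi-standard graded $\kk$-algebras: for $T$, flatness of tensor product over a field preserves Cohen--Macaulayness and the property of being semi-standard, and both features pass to the quotient by a homogeneous regular sequence. For the $(*)$-property of $T$, I would take degree-zero monomorphisms $\phi_A:A\hookrightarrow \omega_A(-a_A)$ and $\phi_B:B\hookrightarrow \omega_B(-a_B)$ provided by the $(*)$-property of $A$ and $B$, and form $\phi_A\otimes_\kk \phi_B$, using $\omega_T\cong \omega_A\otimes_\kk\omega_B$ and $a_T=a_A+a_B$; exactness of $\otimes_\kk$ gives injectivity. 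For $T/(\xb)$ the $(*)$-property is assumed. Finally $e(T)=e(A)e(B)>1$.

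Second, I would compute the relevant invariants from the $h$-polynomials. Since $h_T(t)=h_A(t)h_B(t)$, Corollary~\ref{h_PRODUCTcondition} gives $\delta(T)=e(B)\delta(A)+e(A)\delta(B)$, so equality holds in \eqref{multiplicitycondition1} for $R=T$. Passing to a quotient by a regular element of degree $a$ multiplies the $h$-polynomial by $1+t+\cdots+t^{a-1}$, and hence iterating Proposition~\ref{ecformula},
\begin{equation*}
\delta(T/(\xb))=\Bigl(\prod_{k=1}^n a_k\Bigr)\bigl(e(B)\delta(A)+e(A)\delta(B)\bigr)\;\ge\; e(B)\delta(A)+e(A)\delta(B),
\end{equation*}
so \eqref{multiplicitycondition1} is satisfied for $R=T/(\xb)$ as well. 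Moreover $r(T)=r(A)r(B)$ and the Cohen--Macaulay type is unchanged by quotienting by a regular sequence, so $r(R)=r(A)r(B)$ in both cases, and the type bound $r(R)\le r(A)r(B)$ of Theorem~\ref{MMMMM} is automatic.

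With these checks in hand, Remark~\ref{independence} identifies condition (1) of Theorem~\ref{MMMMM} with the $\phi$-free statement ``$R$ is almost Gorenstein and $r(R)\le r(A)r(B)$,'' which by the previous paragraph reduces to ``$R$ is almost Gorenstein.'' Theorem~\ref{MMMMM} then gives directly $(1)\Leftrightarrow(2)\Leftrightarrow$ ``$A$ and $B$ Gorenstein with equality in \eqref{multiplicitycondition1}.'' Thus $(1)\Rightarrow(3)$. The converse $(3)\Rightarrow(2)$ is standard: tensor product over a field of Gorenstein graded rings is Gorenstein, and Gorensteinness descends along quotients by regular sequences; and $(2)\Rightarrow(1)$ is trivial.

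The only mildly subtle point, and the one I would be most careful about, is the conclusion $(1)\Rightarrow(3)$ in the quotient case $R=T/(\xb)$: here the equality clause produced by Theorem~\ref{MMMMM} reads $\bigl(\prod a_k-1\bigr)\bigl(e(B)\delta(A)+e(A)\delta(B)\bigr)=0$, so if $\prod a_k>1$ one concludes $\delta(A)=\delta(B)=0$, and Proposition~\ref{gradedmuformula} combined with $\mu(C)\le e(C)$ forces $r(A)=r(B)=1$, i.e.\ $A$ and $B$ Gorenstein, recovering (3) regardless.
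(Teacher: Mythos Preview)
Your proposal is correct and follows essentially the same approach as the paper: verify the hypotheses of Theorem~\ref{MMMMM} via the $h$-polynomial identities $h_T=h_Ah_B$ and $h_{T/(\xb)}=\bigl(\prod_k(1+\cdots+t^{a_k-1})\bigr)h_T$, using Proposition~\ref{ecformula} and Corollary~\ref{h_PRODUCTcondition}, together with $r(R)=r(A)r(B)$. Your final paragraph is unnecessary, since condition~(3) of Theorem~\ref{MMMMM} already asserts outright that $A$ and $B$ are Gorenstein, so the corollary's~(3) follows immediately without any further analysis of the equality clause.
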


\begin{proof}
Note that $r(R)=r(T)=r(A)r(B)$ (cf. \cite[Theorem 3.3.5 (a)]{bruns1998cohen} and \cite[Theorem 4.2]{herzog2019trace}) and $R$ satisfies $(*)$ since $A$ and $B$ do.
First, we consider the case of $R=T$.
Since $h_{A}(t)h_{B}(t)=h_T(t)$,
the assertion follows from by Theorem \ref{MMMMM} and Corollary \ref{h_PRODUCTcondition}.
Next, we consider the case of $R=T/(\xb)$.
In this case, we can check
$h_R(t)=\left( \prod_{i=1}^n(1+t+\cdots+t^{a_i-1}) \right)h_T(t)$
so we have $\delta(R)=a_1\cdots a_n \delta(T)$
by Proposition \ref{ecformula}.
Then we have
$$\delta(R) \geq \delta(T)=e(B)\delta({A})+e(A)\delta({B})$$
by Corollary \ref{h_PRODUCTcondition}
so the assertion follows from Theorem \ref{MMMMM}.
\end{proof}

We can study polynomial extensions of almost Gorenstein rings by using our main theorem.
The semi-standard graded case can be proven easily.

\begin{cor}\label{polynomialextension}
Let $R$ and $S=R[x]$ be Cohen--Macaulay semi-standard graded rings over a field $\kk$.
Then the following conditions are equivalent:
\begin{itemize}
\item[(1)] $S$ is almost Gorenstein;
\item[(2)] $R$ is almost Gorenstein.
\end{itemize}
\begin{proof}
Note that $S=R\otimes_\kk \kk[x]$ and that $R$ satisfies $(*)$ if and only if $S$ does.
We have $r(R)=r(S)$ and $\delta({R}) = \delta({S})$
by Corollary \ref{h_PRODUCTcondition}.
Thus we can see that all the equalities appearing in (\ref{rem_eq}) hold, so
the assertion follows by the observation of Remark~\ref{drop_assumption} (a).
\end{proof}
\end{cor}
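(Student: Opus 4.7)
The plan is to realize $S = R \otimes_\kk \kk[x]$ and apply the tensor-product machinery from the previous subsection, landing in precisely the $e(A) = 1$ edge case of Theorem~\ref{MMMMM} that is handled by Remark~\ref{drop_assumption} (a). First I would record the basic numerical data: since $h_{\kk[x]}(t) = 1$, Corollary~\ref{h_PRODUCTcondition} (or a direct multiplication of $h$-polynomials) gives $h_S(t) = h_R(t)$, hence $e(S) = e(R)$ and $\delta(S) = \delta(R)$; combined with the standard type formula $r(S) = r(R) \cdot r(\kk[x]) = r(R)$ (cf.\ \cite[Theorem~3.3.5~(a)]{bruns1998cohen}), these are the only numerical inputs required.

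Next I would verify that condition $(*)$ is preserved between $R$ and $S$: a degree-zero monomorphism $\phi\colon R \hookrightarrow \omega_R(-a_R)$ tensors with the identity on $\kk[x]$ to yield a degree-zero monomorphism $S \hookrightarrow \omega_S(-a_S)$ via $\omega_S \cong \omega_R \otimes_\kk \omega_{\kk[x]}$, and the reverse direction follows by reducing modulo $x$. Granted $(*)$, almost Gorensteinness in the semi-standard graded setting is equivalent to the single numerical equality $\delta(\cdot) = r(\cdot) - 1$, by Proposition~\ref{gradedmuformula} together with the Ulrich condition $\mu(C) = e(C)$, and this condition does not depend on the chosen $\phi$ by Remark~\ref{independence}.

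Combining these two steps, $\delta(S) = r(S) - 1$ holds if and only if $\delta(R) = r(R) - 1$, yielding $(1) \Leftrightarrow (2)$. Phrased via Remark~\ref{drop_assumption} (a) with ``$A$''$\,=\kk[x]$ and ``$B$''$\,=R$, the multiplicity hypothesis (\ref{multiplicitycondition1}) holds with equality and the chain (\ref{rem_eq}) collapses to all equalities, so almost Gorensteinness transfers between $R$ and $S$ in both directions. I do not anticipate a substantive obstacle; the only mildly delicate point is the invariance of $(*)$ under polynomial extension, and this is routine once one writes down the tensor decomposition of the canonical module.
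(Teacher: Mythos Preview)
Your proposal is correct and follows essentially the same route as the paper: identify $S=R\otimes_\kk\kk[x]$, use the $h$-polynomial identity to get $\delta(S)=\delta(R)$ and the type formula to get $r(S)=r(R)$, note that $(*)$ passes between $R$ and $S$, and then invoke the numerical criterion $\delta=r-1$ for almost Gorensteinness (equivalently, the collapse of the chain~(\ref{rem_eq}) in Remark~\ref{drop_assumption}~(a)). Your write-up simply makes explicit a couple of steps the paper leaves implicit.
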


\section{Applications to affine semigroup rings}
In this section, we provide some applications of our results to affine semigroup rings; numerical semigroup rings, edge rings and stable set rings.

\subsection{Application to gluings of numerical semigroups}
In this section, we give another proof of Theorem~\ref{NUMEE} from viewpoints of our main theorem.

\begin{prop}\label{ORENOFORMULA}
Let $H_1$ and $H_2$ be numerical semigroups
and let ${x_1} \in H_1 \setminus G(H_1)$, $x_2 \in H_2 \setminus G(H_2)$ with $\gcd(x_1,x_2)=1$.
Moreover, let
$H=\langle{x_2}H_1,{x_1}H_2 \rangle$ be the gluing of $H_1$ and $H_2$.
We put $R=\kk[H]$
and $A_i=\kk[H_i]$
for $i=1,2$.
Then we have $$\delta(R)={x_2}\delta({A_1})+{x_1}\delta({A_2}).$$
\begin{proof}
For $i=1,2$, the Hilbert series of $A_i$ and $\omega_{A_i}(-a_i)$ are of the following forms
by Proposition \ref{canonicalHilbert} and \ref{aperyHilbert};
$$\H(A_i,t)= \frac{1}{1-t^{{x_i}}}\sum_{a \in \Ap(H_i,{x_i})} t^a,
\;\;\;\H({\omega_{A_i}}(-a_{A_i}),t)
=\frac{t^{F(H_i)+{x_i}}}{1-t^{x_i}}\sum_{a \in \Ap(H_i,{x_i})} \frac{1}{t^a}.
$$
For $i=1,2$, we can take graded $R$-monomorphisms $\phi: R \hookrightarrow \omega_R(-a_R)$, $\phi_i: A_i \hookrightarrow \omega_{A_i}(-a_{A_i})$
of degree 0 since $R$, $A_1$ and $A_2$ are domain.
Put
$C_{R}=\cok(\phi)$, $C_{A_i}=\cok(\phi_i)$ and
$f_i(t):=\H(C_{A_i},t)$ for $i=1,2$, then we have $f_i(t) \in \ZZ_{\ge 0}[t]$ and
$\delta({A_i})=f_i(1)$
since $\dim C_{A_i}=0$.
Now we put $g(t):=1-t^{x_1x_2}$, for $(p,q)=(1,2),(2,1)$, we have
$$(t^{x_q})^{F(H_p)+{x_p}}\sum_{a\in \Ap(H_p,{x_p})}
\frac{1}{(t^{x_q})^{a}}=g(t)f_{p}(t^{x_q})+\sum_{a\in \Ap(H_p,{x_p})}(t^{x_q})^{a}$$
since $f_i(t)=\H(\omega_{A_i}(-a_{A_i}),t)-\H(R,t)$ for $i=1,2$.
Put $F_1(t):=\sum_{a\in \Ap(H_1,{x_1})}
(t^{{x_2}})^a$, $F_2(t):=\sum_{b\in \Ap(H_2,{x_2})} (t^{{x_1}})^b$.
Note that $F_i(1)=x_i$ for $i=1,2$.
By using lemma \ref{Nari1}, we can check
the following.
$$
\H(R,t)
= \frac{1}{g(t)}\sum_{w \in \Ap(H,{x_1x_2})} t^w
= \frac{1}{g(t)}\sum_{
\substack{ a\in \Ap(H_1,{x_1}) \\ b\in \Ap(H_2,{x_2}) }
} t^{a{x_2}+b{x_1}}
= \frac{F_1(t)F_2(t)}{g(t)},
$$
\begin{equation}\notag
\begin{split}
\H(\omega_{R}(-a_R),t)
&=\frac{1}{g(t)}
\left( (t^{x_2})^{F(H_1)+{x_1}}\sum_{a\in \Ap(H_1,{x_1})}
\frac{1}{(t^{x_2})^{a}} \right) \left( (t^{x_1})^{F(H_2)+{x_2}} \sum_{b\in \Ap(H_2,{x_2})} \frac{1}{(t^{x_1})^{b}}\right)\\
&=\frac{(
g(t)f_1(t^{x_2})+F_1(t)
)
(g(t)f_2(t^{x_1})+F_2(t))}{g(t)}\\
&=g(t)f_1(t^{x_2})f_2(t^{x_1})+
F_1(t)f_2(t^{x_1})
+
F_2(t)f_1(t^{x_2})+\H(R,t),
\end{split}
\end{equation}

$$
\H(C_R,t)
= \H({\omega_R}(-a_R),t)-\H(R,t)
=
g(t)f_1(t^{x_2})f_2(t^{x_1})+F_1(t)f_2(t^{x_1})
+F_2(t)f_1(t^{x_2}).
$$

Therefore, we have
$\delta(R)=\H(C_R,1)=
F_2(1)f_1(1)+F_1(1)f_2(1)
={x_2}\delta({A_1})+{x_1}\delta({A_2})$.
\end{proof}
\end{prop}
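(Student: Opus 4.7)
My plan is to compute the Hilbert series $\H(R,t)$ and $\H(\omega_R(-a_R),t)$ explicitly using the Apéry-set formula of Proposition~\ref{aperyHilbert}, the canonical-module formula of Proposition~\ref{canonicalHilbert}, and Nari's description of $\Ap(H,x_1x_2)$ from Lemma~\ref{Nari1}. The key observation is that since $\gcd(x_1,x_2)=1$ and $\Ap(H,x_1x_2)=\{x_2 a + x_1 b : a\in \Ap(H_1,x_1),\, b\in \Ap(H_2,x_2)\}$, the sum $\sum_{w\in \Ap(H,x_1x_2)} t^w$ factors as $F_1(t)F_2(t)$, where $F_i$ is defined as in the statement. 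This will give
\[
\H(R,t) = \frac{F_1(t)F_2(t)}{g(t)}, \qquad g(t)=1-t^{x_1x_2}.
\]

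Next I would do the analogous computation for the canonical module. By Proposition~\ref{canonicalHilbert}, $\H(\omega_R(-a_R),t)=(-1)^d t^{a_R}\H(R,t^{-1})$, and using $F(H)=x_2 F(H_1)+x_1 F(H_2)+x_1x_2$ from Lemma~\ref{Nari1}, the resulting expression factors as a product of the two corresponding expressions for $\omega_{A_1}(-a_{A_1})$ and $\omega_{A_2}(-a_{A_2})$, each evaluated at $t^{x_2}$ and $t^{x_1}$ respectively, divided by $g(t)$. Now the crucial trick is to substitute $t\mapsto t^{x_2}$ (resp.\ $t\mapsto t^{x_1}$) into the identity $\H(\omega_{A_i}(-a_{A_i}),t)=\H(A_i,t)+f_i(t)$, i.e.\
\[
(t^{x_q})^{F(H_p)+x_p}\!\!\sum_{a\in\Ap(H_p,x_p)}(t^{x_q})^{-a}
= g(t)\, f_p(t^{x_q}) + F_p(t),
\]
which holds because $1-t^{x_1x_2}=1-(t^{x_q})^{x_p}$. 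Multiplying the two resulting identities for $(p,q)=(1,2),(2,1)$ and dividing by $g(t)$ yields a clean expression for $\H(\omega_R(-a_R),t)$.

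After subtracting $\H(R,t)=F_1(t)F_2(t)/g(t)$, the term $F_1(t)F_2(t)/g(t)$ cancels, and I obtain
\[
\H(C_R,t) = g(t)\,f_1(t^{x_2})f_2(t^{x_1}) + F_1(t)f_2(t^{x_1}) + F_2(t)f_1(t^{x_2}).
\]
Since $\dim C_R = 0$ (because $R$, $A_1$, $A_2$ are all one-dimensional domains, so $\dim C_R = \dim R - 1 = 0$), one has $\delta(R)=e(C_R)=\H(C_R,1)$. Using $g(1)=0$, $F_i(1)=|\Ap(H_i,x_i)|=x_i$, and $f_i(1)=\delta(A_i)$, the desired formula $\delta(R)=x_2\delta(A_1)+x_1\delta(A_2)$ drops out.

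The main obstacle will be keeping the bookkeeping straight when rewriting $\H(\omega_R(-a_R),t)$: one must verify that the substitution $t\mapsto t^{x_q}$ inside the identity for $A_p$ is legitimate (it is, because both sides are rational functions in $t^{x_p}$ and the exponents multiply cleanly thanks to $\gcd(x_1,x_2)=1$) and that the exponents line up with Nari's Frobenius-number identity. Once the Hilbert series of $C_R$ is written in the factored form above, the evaluation at $t=1$ is routine; the nontrivial content is entirely packed into the factorization step.
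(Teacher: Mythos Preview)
Your proposal is correct and follows essentially the same approach as the paper's proof: compute $\H(R,t)$ via the Ap\'ery-set factorization from Lemma~\ref{Nari1}, rewrite each factor of $\H(\omega_R(-a_R),t)$ using the substituted identity $\H(\omega_{A_p}(-a_{A_p}),t^{x_q})=\H(A_p,t^{x_q})+f_p(t^{x_q})$, and then evaluate $\H(C_R,t)$ at $t=1$. The only cosmetic difference is that the paper states the substituted identity first and then assembles the product, whereas you describe the assembly and then justify the substitution; the content is identical.
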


\begin{thm}[{\cite[Theorem 6.7]{nari2013symmetries}} and \cite{De}]
\label{NUMEGLUINGOYO}
Let $H_1$ and $H_2$ be numerical semigroups
and let ${x_1} \in H_1 \setminus G(H_1)$, $x_2 \in H_2 \setminus G(H_2)$ with $\gcd(x_1,x_2)=1$.
Moreover, let
$H=\langle{x_2}H_1,{x_1}H_2 \rangle$ be the gluing of $H_1$ and $H_2$.
Then the following conditions are equivalent:
\begin{itemize}
\item[(1)] $\kk[H]$ is almost Gorenstein;
\item[(2)] $\kk[H]$ is Gorenstein;
\item[(3)] $\kk[H_1]$ and $\kk[H_2]$ are Gorenstein.
\end{itemize}
\begin{proof}
Since
${x_2} > e(\kk[H_2])$, ${x_1} > e(\kk[H_1])$,
we have
$$\delta({\kk[H]}) \geq e(\kk[H_2])\delta({\kk[H_1]})+e(\kk[H_1])\delta({\kk[H_2]})$$
by Proposition \ref{ORENOFORMULA}.
Thus the assertion follows from Lemma \ref{Nari1} and Theorem \ref{MMMMM}.
\end{proof}
\end{thm}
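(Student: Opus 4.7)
The plan is to apply Theorem~\ref{MMMMM} with $A=\kk[H_1]$, $B=\kk[H_2]$, and $R=\kk[H]$. All three are one-dimensional Cohen--Macaulay positively graded $\kk$-algebras (numerical semigroup rings are affine semigroup domains of Krull dimension one), and since each of $A$, $B$, $R$ is a domain, the construction used inside the proof of Proposition~\ref{ORENOFORMULA} produces degree-zero $R$-monomorphisms $A\hookrightarrow\omega_A(-a_A)$, $B\hookrightarrow\omega_B(-a_B)$, $R\hookrightarrow\omega_R(-a_R)$, so $A$, $B$, and $R$ all satisfy $(*)$.

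Next I would verify the multiplicity hypothesis (\ref{multiplicitycondition1}). Since $x_i\in H_i\setminus G(H_i)$ with $x_i\neq 0$, and since $\mult(H_i)$ (the smallest nonzero element of $H_i$) necessarily lies in the minimal generating set $G(H_i)$, we must have $x_i>\mult(H_i)=e(\kk[H_i])$; in particular $e(A),e(B)\geq 2$, so the hypothesis $e(A)>1$, $e(B)>1$ of Theorem~\ref{MMMMM} is met. Combining $x_1>e(A)$ and $x_2>e(B)$ with the formula of Proposition~\ref{ORENOFORMULA} yields
\[
\delta(\kk[H]) \;=\; x_2\,\delta(\kk[H_1])+x_1\,\delta(\kk[H_2]) \;\geq\; e(B)\,\delta(A)+e(A)\,\delta(B),
\]
which is exactly the required inequality.

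Finally, I would check the type condition appearing in part (1) of Theorem~\ref{MMMMM}. Lemma~\ref{Nari1} records that $\t(H)=\t(H_1)\t(H_2)$, equivalently $r(\kk[H])=r(\kk[H_1])r(\kk[H_2])$, so $r(R)\leq r(A)r(B)$ holds automatically (with equality). With every hypothesis of Theorem~\ref{MMMMM} verified, the equivalences (1)$\Leftrightarrow$(2)$\Leftrightarrow$(3) follow immediately from that theorem. I do not expect a genuine obstacle here: the substantive work is packaged into Proposition~\ref{ORENOFORMULA} (the Hilbert-series identity expressing $\delta(\kk[H])$ in terms of Ap\'ery sets) and Lemma~\ref{Nari1} (the multiplicativity of the type under gluing); once these are available, the argument is a routine verification of the hypotheses of the main theorem.
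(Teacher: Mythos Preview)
Your proposal is correct and follows essentially the same route as the paper's proof: verify $x_i>\mult(H_i)=e(\kk[H_i])$ from $x_i\in H_i\setminus G(H_i)$, feed this into Proposition~\ref{ORENOFORMULA} to obtain the multiplicity inequality~(\ref{multiplicitycondition1}), invoke Lemma~\ref{Nari1} for $r(\kk[H])=r(\kk[H_1])r(\kk[H_2])$, and then apply Theorem~\ref{MMMMM}. The only remark is that your inference ``in particular $e(A),e(B)\geq 2$'' does not follow from $x_i>e(\kk[H_i])$ alone; it requires the (standard, usually tacit) assumption $H_i\neq\NN$, which the paper's proof also leaves implicit.
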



\subsection{Application to edge rings and stable set rings}
In this subsection, we prove Theorems~\ref{main_edge} and \ref{main_stab}.
Before that, we discuss toric splittings of toric ideals, which helps us to show our results.

\textit{Toric ideals} are prime ideals of polynomial rings generated by binomials.
We say that a toric ideal $I$ of a polynomial ring $S$ over $\kk$ has \textit{toric splitting} (or $I$ is a \textit{splittable toric ideal}) if there exist toric ideals $I_1$ and $I_2$ of $S$ such that $I=I_1+I_2$.
See \cite{favacchio2021splittings} for details on toric splittings.

For a polynomial $f$ of $S$, we denote the set of variables of $S$ appearing in $f$ by $\var(f)$ and for a subset $F$ of $S$, we define $\var(F):=\bigcup_{f\in F} \var(f)$.

\begin{prop}\label{prop_split}
Let $I$ be a toric ideal of $S$ generated by binomials $f_1,\ldots,f_n,g_1,\ldots,g_m$ and let $B_1=\{f_1,\ldots,f_n\}$ and $B_2=\{g_1,\ldots,g_m\}$.
Suppose that $I$ has the toric splitting $I=(B_1)+(B_2)$, that $\var(B_1\cup B_2)$ contains all variables of $S$ and that $\var(B_1)\cap \var(B_2)$ consists a single variable $z$.
Then we have
$$S/I\cong \biggl( \kk[\var(\widetilde{B}_1)]/(\widetilde{B}_1)\otimes_\kk \kk[\var(\widetilde{B}_2)]/(\widetilde{B}_2)\biggr) /(z_1-z_2),$$
where $z_1$ and $z_2$ are new variables that do not belong to $S$ and we let $\tilde{f}_i$ (resp. $\tilde{g}_i$) be the polynomial obtained by substituting $z_1$ (resp. $z_2$) for $z$ appearing in $f_i$ (resp. $g_i$) and let $\widetilde{B}_1=\{\tilde{f}_1,\ldots,\tilde{f}_n\}$ and $\widetilde{B}_2=\{\tilde{g}_1,\ldots,\tilde{g}_m\}$.
\end{prop}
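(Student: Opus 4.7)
The plan is to realise both sides of the claimed isomorphism as quotients of a common polynomial ring obtained by ``doubling'' the shared variable $z$. Write $V_1=\var(B_1)$ and $V_2=\var(B_2)$, so by hypothesis $V_1\cap V_2=\{z\}$ and $V_1\cup V_2$ is the full variable set of $S$. Set
\[
T \;=\; \kk[\var(\widetilde{B}_1)]\otimes_\kk\kk[\var(\widetilde{B}_2)],
\]
which is naturally a polynomial ring over $\kk$ in the disjoint variable set $(V_1\setminus\{z\})\sqcup\{z_1\}\sqcup(V_2\setminus\{z\})\sqcup\{z_2\}$. Since $V_1\cap V_2=\{z\}$, the substitutions $z\mapsto z_1$ in elements of $B_1$ and $z\mapsto z_2$ in elements of $B_2$ are unambiguous, so $\widetilde{B}_1,\widetilde{B}_2$ are well defined subsets of $T$.

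First I would introduce the $\kk$-algebra surjection $\pi:T\to S$ that fixes every variable of $(V_1\setminus\{z\})\cup(V_2\setminus\{z\})$ and sends $z_1,z_2\mapsto z$. A direct argument (expand any preimage as a polynomial in $z_1-z_2$ with coefficients independent of $z_2$, or equivalently choose a term order eliminating $z_2$) shows
\[
\ker\pi \;=\; (z_1-z_2),\qquad\text{so}\qquad T/(z_1-z_2)\;\cong\;S.
\]
By construction $\pi(\widetilde{B}_1)=B_1$ and $\pi(\widetilde{B}_2)=B_2$, so if we set $J:=(\widetilde{B}_1)+(\widetilde{B}_2)\subseteq T$, then $\pi$ sends $J$ onto $I=(B_1)+(B_2)$ and hence induces an isomorphism
\[
T\big/\bigl(J+(z_1-z_2)\bigr)\;\xrightarrow{\;\sim\;}\;S/I.
\]

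Second, I would invoke the standard identification for a tensor product of quotients of polynomial rings in disjoint variable sets,
\[
T/J \;\cong\; \kk[\var(\widetilde{B}_1)]/(\widetilde{B}_1)\otimes_\kk \kk[\var(\widetilde{B}_2)]/(\widetilde{B}_2),
\]
which is immediate from the universal property of the tensor product. Passing further to the quotient by $z_1-z_2$ and combining with the isomorphism of the previous step yields exactly the asserted isomorphism.

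I do not expect any deep obstacle: the only step requiring a little care is the computation $\ker\pi=(z_1-z_2)$, but since $\pi$ is just the identification of two distinct variables in a polynomial ring, this is a standard elimination argument. Everything else is bookkeeping that hinges on the hypotheses $V_1\cap V_2=\{z\}$ (so the doubling is well defined) and $V_1\cup V_2$ being the whole variable set of $S$ (so $\pi$ is surjective).
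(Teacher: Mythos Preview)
Your proposal is correct and follows essentially the same route as the paper: both identify the right-hand side with $T/(J+(z_1-z_2))$ via the tensor-product-of-quotients description, and then pass to $S/I$ via the surjection $T\to S$ collapsing $z_1,z_2$ to $z$. The paper states the middle isomorphism $\kk[\var(\widetilde{B}_1\cup\widetilde{B}_2)]/((\widetilde{B}_1)+(\widetilde{B}_2)+(z_1-z_2))\cong S/I$ without further comment, whereas you spell out the map $\pi$ and the kernel computation $\ker\pi=(z_1-z_2)$; this extra detail is harmless and arguably clearer.
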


\begin{proof}
Since $\var(B_1\cup B_2)$ contains all variables of $S$ and $I$ has the toric splitting $I=(B_1)+(B_2)$, we have $\kk[\var(B_1\cup B_2)]/((B_1)+(B_2))=S/I$.

Moreover, it follows from
$\var(\widetilde{B}_1)\cap \var(\widetilde{B}_2)=\emptyset$ that
$$\kk[\var(\widetilde{B}_1)]/(\widetilde{B}_1)\otimes_\kk \kk[\var(\widetilde{B}_2)]/(\widetilde{B}_2)\cong \kk[\var(\widetilde{B}_1\cup \widetilde{B}_2)]/((\widetilde{B}_1)+(\widetilde{B}_2)),$$
and hence
\begin{align*}
S/I&=\kk[\var(B_1\cup B_2)]/((B_1)+(B_2)) \\
&\cong \kk[\var(\widetilde{B}_1\cup \widetilde{B}_2)]/((\widetilde{B}_1)+(\widetilde{B}_2)+(z_1-z_2)) \\
&\cong \biggl( \kk[\var(\widetilde{B}_1)]/(\widetilde{B}_1)\otimes_\kk \kk[\var(\widetilde{B}_2)]/(\widetilde{B}_2)\biggr) /(z_1-z_2).
\end{align*}
\end{proof}

\begin{cor}\label{cor_toric_split}
Work with the notation and hypothesis of Proposition~\ref{prop_split}, and assume that $A:=\kk[\var(\widetilde{B}_1)]/(\widetilde{B}_1)$, $B:=\kk[\var(\widetilde{B}_2)]/(\widetilde{B}_2)$ and $R:=S/I$ are semi-standard graded.
Then the following conditions are equivalent:
\begin{itemize}
\item[(1)] $R$ is almost Gorenstein;
\item[(2)] $R$ is Gorenstein;
\item[(3)] $A$ and $B$ are Gorenstein.
\end{itemize}
\end{cor}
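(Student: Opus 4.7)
The plan is to reduce the corollary to Corollary~\ref{cor_tensor} via the isomorphism
$$R\;\cong\;T/(z_1-z_2),\qquad T:=A\otimes_\kk B,$$
furnished by Proposition~\ref{prop_split}. Since $z$ is a variable of the ambient polynomial ring $S$ it has degree $1$, so $z_1-z_2$ is a homogeneous element of degree $1$ in $T$ and hence forms a homogeneous sequence of length one. It therefore suffices to verify the hypotheses of Corollary~\ref{cor_tensor}: that $A$ and $B$ are Cohen--Macaulay semi-standard graded $\kk$-algebras satisfying $(*)$ with $e(A),e(B)>1$, that $z_1-z_2$ is a regular element on $T$, and that $T/(z_1-z_2)\cong R$ satisfies $(*)$.

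The two key observations are the following. First, because $(\widetilde{B}_1)$ and $(\widetilde{B}_2)$ are toric ideals (being relabeled copies of the toric ideals $(B_1)$ and $(B_2)$), the rings $A$ and $B$ are affine semigroup rings and hence integral domains. Consequently $T=A\otimes_\kk B$ is itself the semigroup ring of the direct sum of the two underlying affine semigroups and is therefore also a domain; in particular the nonzero element $z_1-z_2\in T$ is automatically a non-zero-divisor. Secondly, for any positively graded Cohen--Macaulay domain $X$ condition $(*)$ is automatic: since $\omega_X(-a_X)$ is a torsion-free rank-$1$ $X$-module and $(\omega_X(-a_X))_0\neq 0$ by definition of $a_X$, any nonzero degree-$0$ map $X\to\omega_X(-a_X)$ (which exists since $\Hom_X(X,\omega_X(-a_X))_0=(\omega_X)_{-a_X}\neq 0$) is injective. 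Thus $(*)$ holds for $A$, $B$, and $R$ as soon as each of them is Cohen--Macaulay.

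Granting the non-degeneracy hypothesis $e(A),e(B)>1$, the equivalences now follow from Corollary~\ref{cor_tensor}. For the implication (1)$\Rightarrow$(3): if $R$ is almost Gorenstein then $R$ is Cohen--Macaulay, and regularity of $z_1-z_2$ on $T$ propagates Cohen--Macaulayness first to $T$ and then to each tensor factor $A$ and $B$; Corollary~\ref{cor_tensor} then delivers the Gorensteinness of $A$ and $B$. The implications (3)$\Rightarrow$(2) and (2)$\Rightarrow$(1) follow from the same application of Corollary~\ref{cor_tensor} in the opposite direction. I expect the main obstacle to be verifying regularity of $z_1-z_2$ on $T$: this is precisely the point where the toric (rather than merely graded) nature of the splitting is essential, and it is resolved by recognizing $T$ as the semigroup ring of a cancellative affine semigroup.
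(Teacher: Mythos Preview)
Your approach is the same as the paper's: invoke Proposition~\ref{prop_split} to identify $R$ with $(A\otimes_\kk B)/(z_1-z_2)$ and then appeal to Corollary~\ref{cor_tensor}. The paper's proof is a one-line citation of these two results; you have supplied the verifications the paper leaves implicit (that $A$, $B$, $T$ are domains, that $z_1-z_2$ is therefore regular on $T$, that $(*)$ is automatic for graded Cohen--Macaulay domains, and that Cohen--Macaulayness propagates between $R$, $T$, and the tensor factors).

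Two small points. First, your assertion that $z$ has degree~$1$ is not justified by the stated hypotheses (nothing forces $S$ to be standard graded), but this is harmless: Corollary~\ref{cor_tensor} allows the regular element to have arbitrary positive degree, and $z_1-z_2$ is certainly homogeneous. Second, you ``grant'' the hypothesis $e(A),e(B)>1$ rather than verify it. It does in fact follow from the setup: since $z\in\var(B_1)\cap\var(B_2)$, each $\widetilde{B}_i$ is nonempty and generates a nonzero prime binomial ideal, so neither $A$ nor $B$ is a polynomial ring; for a Cohen--Macaulay semi-standard graded $\kk$-algebra, $e=1$ forces the $h$-polynomial to equal $1$ and hence the ring to be polynomial, so $e(A),e(B)>1$. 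You should state this rather than leave it as an assumption.
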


\begin{proof}
It follows from Corollary~\ref{cor_tensor} and Proposition~\ref{prop_split} (even if $\var(B_1\cup B_2)$ does not necessarily contain all variables of $S$, we can show this claim by applying polynomial extension and Corollary~\ref{polynomialextension}).
\end{proof}

We now give the proofs of our theorems and some examples.
First, we provide an application to edge rings:

\begin{thm}\label{thm_ed}
Let $G_1$ and $G_2$ be simple graphs.
Suppose that $G_1$ is bipartite and $\kk[G_2]$ is Cohen--Macaulay with $e(\kk[G_i])>1$ for $i=1,2$.
Then the following are equivalent: 
\begin{itemize}
\item[(1)] $\kk[G_1\sharp G_2]$ is almost Gorenstein;
\item[(2)] $\kk[G_1\sharp G_2]$ is Gorenstein;
\item[(3)] $\kk[G_1]$ and $\kk[G_2]$ are Gorenstein.
\end{itemize}
\end{thm}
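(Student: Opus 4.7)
My plan is to apply Corollary~\ref{cor_toric_split} to the toric ideal $I_G$ of $G := G_1\sharp G_2$ inside the polynomial ring $\kk[x_e : e\in E(G)]$, by producing a toric splitting of the form $I_G = I_{G_1}+I_{G_2}$ that shares at most one variable.

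The first key step is the decomposition of the toric ideal. I would invoke the classical Villarreal--Ohsugi--Hibi description of generators of edge-ring toric ideals by primitive even closed walks: for any such walk $W$ in $G$, bipartiteness of $G_1$ forces each maximal sub-walk of $W$ lying inside $G_1$ to have even length, so $W$ can be decomposed into a concatenation of even closed walks each living entirely in $G_1$ or entirely in $G_2$. Hence every binomial generator of $I_G$ lies in $I_{G_1}+I_{G_2}$. I expect this decomposition to be the main technical obstacle, and it is precisely the step where bipartiteness of $G_1$ is essential; without it, primitive even closed walks can genuinely mix the two subgraphs.

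The second step controls the shared variables. Since $G_1$ is triangle-free, the common clique $V(G_1)\cap V(G_2)$ has at most two vertices, so $G_1$ and $G_2$ share at most one edge $e_0$, and consequently the generator variables of $I_{G_1}$ and $I_{G_2}$ meet in at most the single variable $x_{e_0}$. When exactly one variable is shared, Corollary~\ref{cor_toric_split} applies directly with $A=\kk[G_1]$, $B=\kk[G_2]$, and $R=\kk[G]$; when none is shared, the parenthetical variant in the proof of Corollary~\ref{cor_toric_split} (via polynomial extension and Corollary~\ref{polynomialextension}) reduces the problem to the tensor product $\kk[G]\cong\kk[G_1]\otimes_\kk\kk[G_2]$ handled by Corollary~\ref{cor_tensor}. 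The standing hypotheses of those corollaries are in place: bipartiteness of $G_1$ makes $\kk[G_1]$ normal and hence Cohen-Macaulay by Hochster's theorem, $\kk[G_2]$ is Cohen-Macaulay by assumption, both edge rings are standard graded domains and so satisfy condition $(*)$, and $e(\kk[G_i])>1$ is given by hypothesis. The equivalence of (1), (2), and (3) then follows.
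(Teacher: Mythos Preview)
Your plan coincides with the paper's proof in structure: case on $k:=|V(G_1)\cap V(G_2)|\le 2$, use the tensor-product isomorphism $\kk[G_1\sharp G_2]\cong\kk[G_1]\otimes_\kk\kk[G_2]$ and Corollary~\ref{cor_tensor} when $k\le 1$, and for $k=2$ invoke the toric splitting $I_G=I_{G_1}+I_{G_2}$ with the single shared variable $x_c$ (where $c$ is the common edge) so that Corollary~\ref{cor_toric_split} applies. The only difference is that the paper quotes this splitting directly from \cite[Corollary~4.8]{favacchio2021splittings}, while you outline a proof via primitive even closed walks.

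Your outline has one imprecision worth fixing. When $k=2$, the two shared vertices $u,v$ are adjacent in $G_1$ and therefore lie in \emph{different} parts of the bipartition. Hence a maximal sub-walk of a primitive even closed walk $W$ that lies in $G_1$ and runs from $u$ to $v$ (or $v$ to $u$) has \emph{odd} length, contrary to your claim that bipartiteness forces all such sub-walks to be even. The repair is standard: each odd $G_1$-segment between $u$ and $v$ can be closed up to an even closed walk in $G_1$ by appending the shared edge $c=\{u,v\}$, and the resulting extra copies of $x_c$ are absorbed on the $G_2$ side; algebraically this is exactly how one rewrites the binomial $f_W$ as an element of $I_{G_1}+I_{G_2}$. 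This is what the proof in \cite{favacchio2021splittings} does, so the splitting you need is correct---just adjust the justification accordingly, or cite the result as the paper does.
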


\begin{proof}
It is known that the edge ring of a bipartite graph is normal (\cite[Corollary~2.3]{ohsugi1998normal}), and hence $\kk[G_1]$ is Cohen--Macaulay.
Note that $k:=|V(G_1)\cap V(G_2)|\le 2$ since $G_1$ is bipartite.
If $k\le 1$, then we have $\kk[G_1\sharp G_2]\cong \kk[G_1]\otimes_{\kk}\kk[G_2]$ (cf. \cite[Proposition~10.1.48]{villarreal2018monomial}).
Therefore, conditions (1), (2) and (3) are equivalent by Corollary~\ref{cor_tensor}.

Suppose that $k=2$ and let $c=V(G_1)\cap V(G_2)$ (note that $c\in E(G)$).
We can see that $I_G$ has the toric splitting $I_G=I_{G_1}+I_{G_2}$ (\cite[Corollary~4.8]{favacchio2021splittings}) and the common variable that appears in generators of $I_{G_1}$ and $I_{G_2}$ is only $x_c$.
Therefore, we get the desired result from Corollary~\ref{cor_toric_split}.
\end{proof}


Next, we present an application to stable set rings:

\begin{thm}\label{thm_st}
Let $G_1$ and $G_2$ be simple graphs.
Suppose that $\Stab_\kk(G_i)$ is Cohen--Macaulay with $e(\Stab_\kk(G_i))>1$ for $i=1,2$.
Then the following are equivalent:
\begin{itemize}
\item[(1)] $\Stab_\kk(G_1 + G_2)$ is almost Gorenstein;
\item[(2)] $\Stab_\kk(G_1 + G_2)$ is Gorenstein;
\item[(3)] $\Stab_\kk(G_1)$ and $\Stab_\kk(G_2)$ are Gorenstein.
\end{itemize}
\end{thm}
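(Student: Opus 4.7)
The plan is to mirror the proof of Theorem~\ref{thm_ed} by exhibiting a toric splitting of $J_{G_1+G_2}$ and then invoking Corollary~\ref{cor_toric_split}. The starting combinatorial observation is that the stable sets of the join $G_1+G_2$ are exactly the stable sets of $G_1$ together with the stable sets of $G_2$: any set meeting both vertex parts contains an edge of $G_1+G_2$, so it fails to be stable. In particular, the only stable set common to $G_1$ and $G_2$ is the empty set.

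From this I would deduce the toric splitting. Let $d_i=|V(G_i)|$. The map $\psi_{G_1+G_2}$ sends $x_S \mapsto (\prod_{i\in S} t_i)\,t_{d_1+d_2+1}$, where stable sets contained in $V(G_1)$ only involve $t_1,\dots,t_{d_1}$ and those contained in $V(G_2)$ only involve $t_{d_1+1},\dots,t_{d_1+d_2}$. Since every binomial in the toric ideal $J_{G_1+G_2}$ equates two monomials with the same total multiset of $t$-variables, and no $t$-variable from $G_1$ can appear together with one from $G_2$ in a single $\psi_{G_1+G_2}(x_S)$, each minimal binomial generator involves only variables $x_S$ with $S$ a stable set of $G_1$ or only variables $x_S$ with $S$ a stable set of $G_2$. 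Hence
\begin{equation*}
J_{G_1+G_2} = J_{G_1}+J_{G_2},
\end{equation*}
and the sets of variables appearing in generators of $J_{G_1}$ and $J_{G_2}$ intersect only in $x_\emptyset$.

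With this splitting in hand, Proposition~\ref{prop_split} gives
\begin{equation*}
\Stab_\kk(G_1+G_2)\cong \bigl(\Stab_\kk(G_1)\otimes_\kk \Stab_\kk(G_2)\bigr)/(z_1-z_2),
\end{equation*}
where $z_1,z_2$ are the two copies of $x_\emptyset$. Since $\Stab_\kk(G_i)$ is a domain, $z_1-z_2$ is a nonzerodivisor on the tensor product. Thus Corollary~\ref{cor_toric_split} applies verbatim and yields the equivalence of (1), (2) and (3); the hypotheses $e(\Stab_\kk(G_i))>1$ feed directly into the tensor-product corollary, and Cohen--Macaulayness of the two factors passes to the quotient by a regular element.

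The main technical point is verifying that the variable shared by generators of $J_{G_1}$ and $J_{G_2}$ is \emph{only} $x_\emptyset$; this is where I expect a small subtlety, since one must rule out any other stable set of $G_1+G_2$ lying simultaneously in both summands. That rules out any need for the extra polynomial-extension device used in Corollary~\ref{cor_toric_split}, since every vertex of $G_1+G_2$ is a stable set (and hence a variable) that appears in one of the two generating families. The rest is a direct citation of Corollary~\ref{cor_toric_split}, so no further estimates are required.
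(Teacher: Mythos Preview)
Your overall strategy is exactly the one the paper uses: establish the toric splitting $J_{G_1+G_2}=J_{G_1}+J_{G_2}$ with the single shared variable $x_\emptyset$, and then invoke Corollary~\ref{cor_toric_split}. The combinatorial observation that the nonempty stable sets of $G_1+G_2$ lie in exactly one of $G_1$ or $G_2$ is also the paper's starting point.

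The one genuine gap is in your argument for the splitting itself. You write that ``each minimal binomial generator involves only variables $x_S$ with $S$ a stable set of $G_1$ or only variables $x_S$ with $S$ a stable set of $G_2$,'' and you justify this by noting that the two sides of any binomial in $J_{G_1+G_2}$ carry the same multiset of $t$-variables. But matching $t$-multidegrees does not by itself force purity: a binomial such as $x_{S_1}x_{T_1}x_{T_2}-x_{S_2}x_{S_3}x_{T_3}$ (with $S_i$ nonempty stable in $G_1$, $T_j$ nonempty stable in $G_2$) can lie in $J_{G_1+G_2}$ while genuinely mixing both sides. What one must show is that any such binomial lies in $J_{G_1}+J_{G_2}$, and this requires an explicit decomposition using $x_\emptyset$ as a bridge. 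The paper isolates this as a separate lemma (Lemma~\ref{lem_stab_spli}) and carries out precisely that decomposition: writing $f=x_{S_1}\cdots x_{S_p}x_{T_1}\cdots x_{T_q}-x_{S'_1}\cdots x_{S'_r}x_{T'_1}\cdots x_{T'_u}x_\emptyset^a$ and splitting it as a sum of a multiple of an element of $J_{G_1}$ and a multiple of an element of $J_{G_2}$, with a small case analysis on the signs of $p-r$ and $q-u$. Your sketch is missing this step; once you supply it, the rest of your proof goes through verbatim.

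A minor side remark: your final paragraph claims that every variable $x_S$ already appears in some generator of $J_{G_1}$ or $J_{G_2}$, so the polynomial-extension clause of Corollary~\ref{cor_toric_split} is unnecessary. That is not obvious (e.g.\ singletons $x_{\{v\}}$ need not occur in any minimal generator), but it is also irrelevant, since Corollary~\ref{cor_toric_split} is stated to cover that case.
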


\begin{proof}
It follows immediately from Corollary~\ref{cor_toric_split} and Lemma~\ref{lem_stab_spli} below.
\end{proof}

\begin{lem}\label{lem_stab_spli}
Let $G_1$ be $G_2$ two graphs with $V(G_1)\cap V(G_2)=\emptyset$ and 
let $B_1=\{f_1,\ldots,f_n\}$ and $B_2=\{g_1,\ldots,g_m\}$ be minimal generating systems of $J_{G_1}$ and $J_{G_2}$, respectively.
Then we have $\var(B_1)\cap \var(B_2)=\{x_{\emptyset}\}$.
Moreover, $J_{G_1+G_2}$ has the toric splitting $J_{G_1+G_2}=J_{G_1}+J_{G_2}$. 
\end{lem}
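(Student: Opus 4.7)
The plan is to establish the two assertions in turn, with the main technical content concentrated in an isomorphism of tensor products of stable set rings.

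For the variable intersection, observe that because $V(G_1)\cap V(G_2)=\emptyset$ and every vertex of $G_1$ is adjacent to every vertex of $G_2$ in $G_1+G_2$, a subset of $V(G_1)\cup V(G_2)$ is stable in both $G_1$ and $G_2$ only when it is $\emptyset$. Since each variable appearing in $B_i$ is of the form $x_S$ with $S$ stable in $G_i$, this already yields $\var(B_1)\cap\var(B_2)\subseteq\{x_\emptyset\}$. For the reverse inclusion (assuming $J_{G_i}\neq 0$, which is implicit whenever the lemma is invoked via $e(\Stab_\kk(G_i))>1$), pick any non-adjacent pair $\{a,b\}$ in $G_i$; then $x_\emptyset x_{\{a,b\}}-x_{\{a\}}x_{\{b\}}$ lies in $J_{G_i}$, and a direct inspection of the monomials having the matching $\ZZ^{V(G_i)}$-vertex multidegree and total $x$-degree $2$ shows that, in this multigraded component, the above binomial is the only relation up to scalar. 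Hence it must appear in every minimal generating system, so $x_\emptyset\in\var(B_i)$.

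For the toric splitting $J_{G_1+G_2}=J_{G_1}+J_{G_2}$, the inclusion $\supseteq$ is clear since $\psi_{G_1+G_2}$ restricted to $S_i:=\kk[x_S : S\text{ stable in }G_i]$ coincides with $\psi_{G_i}$ up to renaming the auxiliary variable. For the reverse, note that the observation above forces the ambient ring $S$ of $J_{G_1+G_2}$ to decompose as $S=S_1\otimes_{\kk[x_\emptyset]}S_2$, whence
\[
S/(J_{G_1}+J_{G_2}) \;\cong\; \Stab_\kk(G_1)\otimes_{\kk[x_\emptyset]}\Stab_\kk(G_2),
\]
and there is a natural surjection onto $\Stab_\kk(G_1+G_2)=S/J_{G_1+G_2}$. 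The task reduces to showing this surjection is injective.

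This injectivity is the main obstacle. I would handle it by exploiting that each $A_i:=\Stab_\kk(G_i)$ is a free $\kk[x_\emptyset]$-module with a monomial basis: filtering $A_i$ by $t_{n+1}$-degree gives an increasing chain of monomially-spanned $\kk$-subspaces $V_0\subseteq V_1\subseteq\cdots$ inside $\kk[t_v : v\in V(G_i)]$, and choosing monomial complements $W_k$ with $V_k=V_{k-1}\oplus W_k$ yields a $\kk[x_\emptyset]$-basis of $A_i$ of the form $w_\lambda^{(i)} t_{n+1}^{e_\lambda^{(i)}}$, with each $w_\lambda^{(i)}$ a monomial in the $t$-variables indexed by $V(G_i)$. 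The tensor product $A_1\otimes_{\kk[x_\emptyset]}A_2$ then has a $\kk[x_\emptyset]$-basis consisting of the tensors of these, and its image in $\kk[t_1,\ldots,t_{n+1}]$ under the multiplication map consists of the monomials $w_\lambda^{(1)} w_\mu^{(2)} t_{n+1}^{e_\lambda^{(1)}+e_\mu^{(2)}}$, all pairwise distinct because $w_\lambda^{(1)}$ and $w_\mu^{(2)}$ live in disjoint sets of $t$-variables. Injectivity follows, giving the desired equality $J_{G_1}+J_{G_2}=J_{G_1+G_2}$; everything else in the argument is formal.
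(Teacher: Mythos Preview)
Your argument is correct, but the route to the toric splitting is genuinely different from the paper's. The paper proves $J_{G_1+G_2}\subseteq J_{G_1}+J_{G_2}$ by an explicit binomial computation: it takes an arbitrary binomial
\[
f=x_{S_1}\cdots x_{S_p}x_{T_1}\cdots x_{T_q}-x_{S'_1}\cdots x_{S'_r}x_{T'_1}\cdots x_{T'_u}x_\emptyset^{\,a}
\]
in $J_{G_1+G_2}$ (with the $S$'s stable in $G_1$ and the $T$'s in $G_2$), observes that the $V(G_1)$- and $V(G_2)$-parts of the two monomials must separately agree, and then rewrites $f$ as an $S$-linear combination of one binomial in $J_{G_1}$ and one in $J_{G_2}$ by inserting appropriate powers of $x_\emptyset$. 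Your proof instead identifies $S/(J_{G_1}+J_{G_2})$ with $\Stab_\kk(G_1)\otimes_{\kk[x_\emptyset]}\Stab_\kk(G_2)$ and shows this tensor product embeds in a polynomial ring by exhibiting each $\Stab_\kk(G_i)$ as a free $\kk[x_\emptyset]$-module with a monomial basis. The paper's approach is entirely elementary and self-contained; yours is more structural, and the freeness argument would transplant to other toric splittings sharing a single variable whenever the corresponding factor rings are torsion-free over that variable.

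One small point on the first assertion: your claim that the degree-two binomial $x_\emptyset x_{\{a,b\}}-x_{\{a\}}x_{\{b\}}$ ``must appear in every minimal generating system'' is only literally true for generating systems consisting of binomials (which are automatically multihomogeneous, so your $1$-dimensionality argument applies). For an arbitrary minimal homogeneous generating system one should instead argue that if no generator involved $x_\emptyset$ then setting $x_\emptyset=0$ would force $x_{\{a\}}x_{\{b\}}\in J_{G_i}$, contradicting primeness. Either way the conclusion $x_\emptyset\in\var(B_i)$ stands; in fact you are more careful here than the paper, which only explicitly addresses the inclusion $\var(B_1)\cap\var(B_2)\subseteq\{x_\emptyset\}$.
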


\begin{proof}
Notice that each non-empty stable set of $G_1+G_2$ is that of only one of $G_1$ or $G_2$.
The first assertion holds from this fact.

We can see that $J_{G_1}+J_{G_2}\subset J_G$ since $J_{G_i}\subset J_G$ for $i=1,2$.
Thus it is enough to show that any binomial $f\in J_{G_1+G_2}$ belongs to $J_{G_1}+J_{G_2}$.
We may assume that $f$ can be written as
$$f=x_{S_1}\cdots x_{S_p}x_{T_1}\cdots x_{T_q} -
x_{S'_1}\cdots x_{S'_r}x_{T'_1}\cdots x_{T'_u}x_{\emptyset}^a,$$
where $S_i$ and $S'_j$ (resp. $T_k$ and $T'_l$) are non-empty stable sets of $G_1$ (resp. $G_2$), and $a=p+q-r-u\ge 0$.
Moreover, we have $(S_i\cup S'_j)\cap (T_k\cup T'_l)=\emptyset$ for each $i,j,k$ and $l$, which implies that
$x_{S_1}\cdots x_{S_p}x_{\emptyset}^b-x_{S'_1}\cdots x_{S'_r}x_{\emptyset}^{b'}$
and 
$x_{T_1}\cdots x_{T_q}x_{\emptyset}^c-x_{T'_1}\cdots x_{T'_u}x_{\emptyset}^{c'}$
belong to $J_{G_1}$ and $J_{G_2}$ for some $b,b',c$ and $c'$, respectively.

Suppose that $a_1=p-r\ge 0$.
Then we can see that
\begin{align*}
f=\begin{cases}
x_{T_1}\cdots x_{T_q}(x_{S_1}\cdots x_{S_p}-x_{S'_1}\cdots x_{S'_r}x_{\emptyset}^{a_1})+ &\; \\
\hspace{3.0cm} x_{S'_1}\cdots x_{S'_r}x_{\emptyset}^{a_1}(x_{T_1}\cdots x_{T_q}-x_{T'_1}\cdots x_{T'_u}x_{\emptyset}^{a-a_1})  &\text{ if } a\ge a_1, \\
\vspace{-0.4cm}\; &\; \\
x_{T_1}\cdots x_{T_q}(x_{S_1}\cdots x_{S_p}-x_{S'_1}\cdots x_{S'_r}x_{\emptyset}^{a_1})+ &\; \\
\hspace{3.0cm} x_{S'_1}\cdots x_{S'_r}x_{\emptyset}^{a}(x_{T_1}\cdots x_{T_q}x_{\emptyset}^{a_1-a}-x_{T'_1}\cdots x_{T'_u})  &\text{ if } a\le a_1,
\end{cases}
\end{align*}
and hence $f\in J_{G_1}+J_{G_2}$.

If $a_1<0$, we have $a_2=q-u\ge0$.
Otherwise, $a=(p-r)+(q-u)=a_1+a_2<0$, contradicting the fact that $a\ge 0$.
Therefore, we can show that $f\in J_{G_1}+J_{G_2}$ by the same argument and conclude that $J_{G_1+G_2}=J_{G_1}+J_{G_2}$.

\end{proof}

Finally, we give two specific applications of our theorems.

\begin{ex}
(a) For two integers $m\ge 1$ and $l\ge 0$, let $\fkg^l_{r_1,\ldots,r_m}$ be the graph consisting of $r_j$ cycles of length $2j + 1$ and $l$ even cycles, such that all cycles share a single common vertex.
If $l=0$, then $\fkg^l_{r_1,\ldots,r_m}$ consists of only odd cycles and we denote it by $\fkg_{r_1,\ldots,r_m}$.
This graph is the clique sum of $\fkg_{r_1,\ldots,r_m}$ and $l$ even cycles.
It is known that the edge ring of an even cycle $C$ is Gorenstein with $e(\kk[C])>1$ (cf. \cite[Theorem~2.1(b)]{ohsugi2006special}) and that almost Gorensteinness of the edge ring of $\fkg_{r_1,\ldots,r_m}$ has been investigated in \cite[Theorem~1.2]{bhaskara2023h}.

From these facts and Theorem~\ref{thm_ed}, we can characterize when $\kk[\fkg^l_{r_1,\ldots,r_m}]$ is (almost) Gorenstein:
\begin{itemize}
\item $\kk[\fkg^l_{r_1,\ldots,r_m}]$ is Gorenstein if and only if $r_1+\cdots +r_m\le 2$.
\item $\kk[\fkg^l_{r_1,\ldots,r_m}]$ is not Gorenstein but alomst Gorenstein if and only if $l=0$, $m=1$ and $r_1\ge 3$.
\end{itemize}


(b) Let $G_1,\ldots,G_k$ be simple connected graphs with at most one $G_i$ not being bipartite and let $C$ be an even cycle with at least $k$ edges.
In addition, let $G$ be the graph obtained by identifying an edge of $G_i$ with a distinct edge of $C$ for each $i$ (this graph appears in \cite[Theorem~3.7]{favacchio2021splittings}).
If $e(\kk[G_i])>1$ for all $i$, then we have the following equivalent conditions:
\begin{align*}
\kk[G] \text{ is almost Gorenstein} \Leftrightarrow
\; \kk[G] \text{ is Gorenstein} \Leftrightarrow
\; \kk[G_i] \text{ is Gorenstein for all $i$}.
\end{align*}
\end{ex}

\begin{ex}
For $m\ge 3$ and $n\ge 1$, we consider the \textit{cone graph} $C_m+\overline{K}_n$, which is the join of the cycle graph $C_m$ of length $m$ and the empty graph $\overline{K}_n$ of order $n$.
It is known that
\begin{itemize}
\item the stable set ring of $C_m$ is almost Gorenstein (\cite[Theorem~4.1]{miyazaki2023non}). In particular, it is Gorenstein if and only if $m$ is even, $m=3$ or $5$ (\cite[Theorem~2.1(b)]{ohsugi2006special} and \cite[Theorem~1]{hibi2019odd}).
\item the stable set ring of $\overline{K}_n$ is isomorphic to the Segre product of $n$ polynomial rings in $2$ variables and is Gorenstein.
Note that $e(\Stab_\kk(\overline{K}_n))=1$ if and only if $n=1$.
\end{itemize}
Therefore, it follows from Theorem~\ref{thm_st} that
\begin{itemize}
\item $\Stab_\kk(C_m+\overline{K}_n)$ is Gorenstein if and only if $m$ is even, $m=3$ or $5$.
\item $\Stab_\kk(C_m+\overline{K}_n)$ is not Gorenstein but almost Gorenstein if and only if $n=1$, $m$ is odd and $m\ge 7$.
\end{itemize}
\end{ex}



\end{document}